\def\tfrac#1#2{\textstyle\frac{#1}{#2}}
\begin{document}

\title[On stable estimation of flow geometry and  wall shear stress]{On the stable estimation of flow geometry and wall shear stress from magnetic resonance images}

\author{H. Egger and G. Teschner}
\address{AG Numerik, TU Darmstadt, Dolivostra{\ss}e 15, 62393 Darmstadt, Germany}
\ead{egger,teschner@mathematik.tu-darmstadt.de}

\newtheorem{theorem}{Theorem}
\newtheorem{lemma}[theorem]{Lemma}
\newenvironment{proof}{Proof.}{\hfill$\square$\newline}
\newtheorem{remark}[theorem]{Remark}

\renewcommand{\thetheorem}{\arabic{section}.\arabic{theorem}.}

\begin{abstract}
We consider the stable reconstruction of flow geometry and wall shear stress from measurements obtained by magnetic resonance imaging. As noted in a review article by Petersson, most approaches considered so far in the literature seem not be satisfactory. We therefore propose a systematic reconstruction procedure that allows to obtain stable estimates of flow geometry and wall shear stress and we are able to quantify the reconstruction errors in terms of bounds for the measurement 
errors under reasonable smoothness assumptions. A full analysis of the approach is given in the framework of regularization methods. 
In addition, we discuss the efficient implementation of our method and we demonstrate its viability, accuracy, and regularizing properties for experimental data.
\end{abstract}

\vspace{2pc}
\noindent{\it Keywords}: wall shear stress estimation, magnetic resonance imaging, ill-posed problem, Tikhonov regularization, conditional stability 

\section{Introduction}

Due to its clinical relevance, the estimation of wall shear stress in arteries, i.e., of the normal derivative of tangential velocity components at aterial walls, has attracted significant interest in the literature; see e.g. \cite{Gimbrone97,MarklEtAl12,Oyre98,Petersson12,PottersEtAl15,Stalder08} and the references therein. 
Unfortunately, most of the approaches utilized so far suffer at least from one of the following artifacts:
\begin{itemize}
 \item wall shear stress is systematically underestimated for coarse data resolution;
 \item for increasing data resolution the estimates become increasingly unstable.
\end{itemize}
Let us briefly discuss some reasons for these problems:
Internal measurements are used in \cite{Oyre98} to fit cubic polynomials to the flow profile. In the presence of a logarithmic turbulent boundary layer \cite{Khoury13}, this leads to a flattened 
approximation of the velocity near the boundary, thus underestimating the velocity derivative 
and overestimating the radius of the flow region.
Due to a kink of the velocity profile at the boundary, a spline interpolation in the whole domain,
as proposed in \cite{Stalder08}, leads to inaccurate representation of the velocity, in particular, 
near the boundary, which makes the estimate of wall shear stress unreliable.
These observations are amplified by the fact that noise in the velocity measurements can be expected to be particularly high close to the boundary; 
compare with the data in Section~\ref{sec:num} and the remarks in the appendix.
Let us note that even for exact velocity data the evaluation of the wall shear stress is unstable with respect to the boundary location due to the discontinuity of the normal derivative of velocity at the boundary.
These observations made the authors of \cite{Petersson12} conclude that {\em even in the absence of noise and for relatively simple velocity profiles, all methods evaluated were found to be impacted by considerable errors}.

In this paper, we investigate the stable estimation of flow geometry, velocity, and wall shear stress 
by a problem adapted approach that overcomes the above difficulties and that allows for a rigorous 
stability and convergence analysis.
The overall reconstruction problem will be decomposed into the following three natural steps:
\begin{itemize}
 \item estimation of the flow boundary from magnetic resonance images (geometry identification); 
 \item reconstruction of flow velocity from phase contrast images in a function class that ensures zero velocity at the estimated flow boundary (velocity estimation);
 \item evaluation of the normal derivative of the velocity at the boundary
       (wall shear stress computation).      
\end{itemize}
In principle, various methods for the solution of the three sub-problems are available. We here consider one particular approach that allows to conduct a full convergence analysis of the overall reconstruction process under reasonable smoothness assumptions.

For the geometry identification problem, we utilize a parametric formulation which leads to a nonlinear inverse problem with a non-differentiable forward operator. We prove a conditional stability estimate and derive convergence rates for Tikhonov regularization under simple smoothness assumptions on the true geometry. 
The parametrization of the flow domain obtained in the first step is then used to formulate the velocity reconstruction problem on a reference domain, resulting in a linear inverse problem with additional data perturbation stemming from the inexact geometry representation. Again, a full convergence analysis of Tikhonov regularization is presented for this problem. 
By choosing sufficiently strong regularization terms in the first two steps, we obtain stability of the geometry and velocity reconstruction in strong norms that allow us to compute the third step in a stable way. 
Apart from a complete theoretical analysis of our approach, we also discuss the efficient numerical realization 
and demonstrate its viability by application to experimental data.

\section{Geometry identification}

For ease of presentation, we assume in the sequel that the flow geometry is cylindrical and that the flow field has the particular form $(0,0,u(x,y))$, which allows us to restrict the considerations to a two dimensional setting; the extension to three dimensions will be discussed briefly in Section~\ref{sec:3d}.
Without loss of generality, we further assume that measurements are available in the domain $D=(-1,1)^2$, which we call the \emph{field of view}.
 
For a given radius function $R : [0,2\pi] \to \mathbb{R}_+$, we define the domain
\begin{equation} \label{eq:OmegaR}
\Omega_R = \{(r \cos \varphi, r \sin \varphi) : 0 \le r < R(\varphi), \ 0 \le \varphi \le 2\pi \},
\end{equation}
parametrized by $R$, and we denote by $\Omega^\dag = \Omega_{R^\dag}$ the true flow geometry. Throughout the presentation, we will assume that 
\begin{equation} \label{eq:assumption}
  R^\dag \in H^k_{per}(0,2\pi) 
  \mbox{ with } \mbox r_0 \le R(\varphi) \le r_1
\end{equation}
for some constants $2 \le k \le 4$ and $0<r_0 < r_1 < 1$; 
here $H^s_{per}(0,2\pi)$ is the subspace of periodic functions in the Sobolev space $H^s(0,2\pi)$, $s \ge 0$.
Let us note that the above assumptions imply in particular that $\Omega^\dag$ is of class $C^{k-1,\alpha}$, 
compactly embedded in $D$, and star shaped with respect to the origin.

The geometry identification from possibly perturbed magnetic resonance images can now be formulated as a nonlinear inverse problem 
\begin{equation} \label{eq:ip1}
F(R) = m^\delta \qquad \mbox{on } D,
\end{equation}
with forward operator $F:D(F) \subset H^2_{per}(0,2\pi) \to L^2(D)$, $R \mapsto  \chi_{\Omega_R}$ 
defined on the domain $D(F) = \{ R \in H^2_{per}(0,2\pi) : r_0 \le R(\varphi) \le r_1 \}$. 
We further assume that the perturbations in the data are bounded by
\begin{equation} \label{eq:noise}
\|\chi_{\Omega^\dag} - m^\delta\|_{L^2(D)} \le \delta 
\end{equation}
and that knowledge of the noise level $\delta$ is available. 
For the stable approximation of solutions to \eref{eq:ip1}, 
we consider Tikhonov regularization with the functional
\begin{equation} \label{eq:tik1}
 J_\alpha^\delta(R) = \|F(R) - m^\delta\|^2_{L^2(D)} + \alpha \|R\|_{H^2(0,2\pi)}^2.
\end{equation}
We call $R_\alpha^\delta \in D(F)$ an {\em approximate minimizer} for regularization parameter $\alpha>0$, 
if 
\begin{equation} \label{eq:approxmin}
J_\alpha^\delta(R_\alpha^\delta) \le \inf_{R \in D(F)} J(R) + \delta^2.
\end{equation}
Note that the operator $F$ here is continuous but not differentiable and, therefore, standard  convergence rate results about Tikhonov regularization, cf. \cite[Chapter~10]{EnglHankeNeubauer96}, do not apply. 
Instead, we utilize recent results on Tikhonov regularization in Hilbert scales under conditional stability \cite{EggerHofmann18,Tautenhahn98}, which allow us to prove the following assertions.
\begin{theorem} \label{thm:1}
Assume that $R^\dag \in H^k_{per}(0,2\pi) \cap D(F)$, $2 \le k \le 4$.
Then any approximate minimizer $R_\alpha^\delta$ of the Tikhonov functional \eref{eq:tik1} 
with $\alpha=\delta^{4/k}$ satisfies
\begin{eqnarray} \label{eq:rate1}
\fl \qquad \qquad
\|F(R_\alpha^\delta) - \chi_{\Omega^\dag}\|_{L^2(D)} \le C \delta 
\quad \mbox{and} \quad 
\|R_\alpha^\delta - R^\dag\|_{H^r(0,2\pi)} \le C \delta^{1-r/k}
\end{eqnarray}
for $0 \le r \le 2$, with a constant $C$ that only depends on the norm $\|R^\dag\|_{H^k(0,2\pi)}$ of the true solution. 
The same estimates hold true, if $\alpha$ is chosen by a discrepancy principle, i.e., 
\begin{equation} \label{eq:disc}
\alpha = \max \{ \alpha_0\, 2^{-n} : n \ge 0 \mbox{ such that } \|F(R_\alpha^\delta)-m^\delta\|_{L^2(D)}\leq 4\delta \}.
\end{equation}
\end{theorem}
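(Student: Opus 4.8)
The plan is to apply the abstract convergence theory for Tikhonov regularization in Hilbert scales under a conditional stability estimate, as developed in \cite{EggerHofmann18,Tautenhahn98}. To invoke such a result, the essential ingredient I would need to establish is a \emph{conditional stability estimate} for the forward operator $F$ of the form
\begin{equation} \label{eq:condstab}
\|R_1 - R_2\|_{L^2(0,2\pi)} \le C_s \|F(R_1) - F(R_2)\|_{L^2(D)}^\theta
\end{equation}
valid on $D(F)$, possibly under an additional a priori bound on a stronger norm, with some exponent $0<\theta\le 1$. The geometric content here is transparent: since $F(R)=\chi_{\Omega_R}$, the $L^2(D)$ distance between two indicator functions equals the area of the symmetric difference $\Omega_{R_1}\triangle\Omega_{R_2}$. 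Writing this area in polar coordinates gives $\frac12\int_0^{2\pi}|R_1(\varphi)^2-R_2(\varphi)^2|\,d\varphi$, and using the uniform bounds $r_0\le R_i\le r_1$ together with $|R_1^2-R_2^2|=|R_1+R_2|\,|R_1-R_2|\ge 2r_0|R_1-R_2|$, I obtain a lower bound proportional to $\|R_1-R_2\|_{L^1(0,2\pi)}$, hence to a (negative) Sobolev norm of $R_1-R_2$. This yields \eref{eq:condstab} directly, in fact with $\theta$ close to $1$ after interpolation.

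\textbf{Fitting the abstract framework.}

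Next I would set up the Hilbert scale generated by the penalty: the regularization term is $\|R\|_{H^2}^2$, the stability estimate \eref{eq:condstab} controls a weak norm (here $L^2$, i.e. essentially $H^0$, or a slightly negative norm coming from the area functional), and the a priori smoothness assumption $R^\dag\in H^k$ with $2\le k\le 4$ supplies the source condition measured on the scale $H^s$. The abstract results of \cite{EggerHofmann18,Tautenhahn98} then combine these three scales — the penalty exponent, the stability exponent, and the smoothness index $k$ — into convergence rates. Matching the interpolation inequality $\|v\|_{H^r}\le \|v\|_{H^0}^{1-r/k}\|v\|_{H^k}^{r/k}$ against the a priori bound on $\|R^\dag\|_{H^k}$ and the stability-driven bound on $\|R_\alpha^\delta-R^\dag\|_{H^0}\sim\delta$ produces exactly the interpolated rate $\delta^{1-r/k}$ claimed in \eref{eq:rate1} for $0\le r\le 2$, while the data-misfit bound $\|F(R_\alpha^\delta)-\chi_{\Omega^\dag}\|_{L^2}\le C\delta$ follows from the defining inequality \eref{eq:approxmin} for approximate minimizers together with the optimal parameter choice $\alpha=\delta^{4/k}$.

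\textbf{Order of the argument and the main obstacle.}

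Concretely I would proceed as follows. First, establish \eref{eq:condstab} by the polar-coordinate area computation sketched above. Second, verify that the remaining hypotheses of the abstract theorem hold: weak sequential continuity/closedness of $F$ on $D(F)$ (so that approximate minimizers exist and the penalty is coercive on $H^2$), and the link condition relating $F$ to the Hilbert scale. Third, insert the source condition $R^\dag\in H^k$ and read off the rate for $r=0$ from the abstract theorem, then extend to $0\le r\le 2$ by interpolation, tracking that the constant $C$ depends only on $\|R^\dag\|_{H^k}$. Finally, for the discrepancy-principle claim \eref{eq:disc}, I would show that the stopping index it selects produces a regularization parameter of the same order $\delta^{4/k}$ as the a priori choice, so the identical rates follow. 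The main obstacle I anticipate is the non-differentiability of $F$: because $R\mapsto\chi_{\Omega_R}$ is continuous but not Fréchet differentiable, I cannot use any tangential-cone or derivative-based condition, which is precisely why the conditional-stability route is forced. Consequently the delicate point is to derive \eref{eq:condstab} with an exponent $\theta$ and on a norm pair that are simultaneously compatible with the $H^2$ penalty and with the range $2\le k\le 4$, so that the abstract machinery yields the full interpolated family of rates rather than only the endpoint $r=0$.
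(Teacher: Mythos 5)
Your overall strategy coincides with the paper's: derive a conditional stability estimate for $F$ from the polar-coordinate computation of the area of the symmetric difference $\Omega_{R_1}\triangle\Omega_{R_2}$, and then invoke the abstract theory of Tikhonov regularization in Hilbert scales under conditional stability (the paper applies Theorems~2.1 and 2.2 of \cite{EggerHofmann18} with $a=0$, $s=2$, $u=k$, $\gamma=1$); the source condition, the interpolation to intermediate norms $H^r$, and the discrepancy principle are all absorbed into those abstract results, exactly as you anticipate.

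There is, however, a genuine gap at the one quantitative step that determines the exponents. Your computation stops at $\|F(R_1)-F(R_2)\|_{L^2(D)}^2 \ge r_0\,\|R_1-R_2\|_{L^1(0,2\pi)}$ and you propose to pass to the $L^2$ norm ``via a negative Sobolev norm and interpolation,'' conceding a stability exponent $\theta$ only ``close to $1$.'' With any $\theta<1$ (equivalently, with stability only in some $H^{-a}$, $a>0$) the abstract theorem delivers rates strictly weaker than $\delta^{1-r/k}$; the values $\gamma=1$ and $a=0$ are precisely what produce \eref{eq:rate1}, and a residual bound of order $\delta$. The missing ingredient is elementary: on $D(F)$ one has the pointwise bound $|R_1(\varphi)-R_2(\varphi)|\le r_1-r_0$, hence
\begin{equation*}
\|R_1-R_2\|_{L^2(0,2\pi)}^2 \le (r_1-r_0)\,\|R_1-R_2\|_{L^1(0,2\pi)} \le \tfrac{r_1-r_0}{r_0}\,\|F(R_1)-F(R_2)\|_{L^2(D)}^2,
\end{equation*}
i.e.\ a genuine Lipschitz stability estimate $\|R_1-R_2\|_{L^2(0,2\pi)}\le C\,\|F(R_1)-F(R_2)\|_{L^2(D)}$ with exponent exactly one in the $L^2=H^0$ norm. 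This is the one-line upgrade the paper performs inside the integrand; with it, your argument closes and yields the stated rates.
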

\begin{proof}
Note that $X_s = H^s_{per}(0,2\pi)$, $s \in \mathbb{R}$ defines a Hilbert scale and $Y=L^2(D)$ is a Hilbert space. In view of the results in \cite{EggerHofmann18}, it thus remains to establish a conditional stability estimate for the operator $F$. To do so, let us assume that $R,\widetilde R \in D(F)$
and define $r_{min}(\varphi) = \min \{R(\varphi),\widetilde R(\varphi)\}$ and $r_{max}(\varphi) = \max\{R(\varphi),\widetilde R(\varphi)\}$.
Then 
\begin{equation*}
|F(R) - F(\widetilde R)|(r \cos \varphi,r \sin \varphi) =
\left\{\begin{array}{ll}
1, & \mbox{for } r_{min}(\varphi) \le r \le r_{max}(\varphi), \\ 0 & \mbox{else}.   
\end{array}\right.
\end{equation*}
This allows us to express the difference in the observations by
\begin{eqnarray*}
\fl
\quad \|F(R)-F(\widetilde R)\|_{L^2(D)}^2 
 = \int\nolimits_0^{2\pi}\int\nolimits_0^{r_1} |F(R) - F(\widetilde R)|^2 r\,dr\,d\varphi 
 =  \int\nolimits_0^{2\pi}\int\nolimits_{r_{min}(\varphi)}^{r_{max}(\varphi)}r\,dr\,d\varphi \\
\fl \qquad \qquad 
 = \frac{1}{2}\int\nolimits_0^{2\pi} r_{max}(\varphi)^2 - r_{min}(\varphi)^2 d\varphi
 =  \frac{1}{2} \int\nolimits_0^{2\pi} | R(\varphi)-\widetilde R(\varphi) | \, |R(\varphi) + \widetilde R(\varphi)|\,d\varphi \\
\fl \qquad \qquad 
 \ge \frac{1}{2}\int\nolimits_0^{2\pi} \vert R(\varphi)-\widetilde R(\varphi)| 2 r_0 \frac{|R(\varphi) - \widetilde R(\varphi)|}{|r_1-r_0|} \,d\varphi
  =  \frac{r_0}{r_1-r_0} \|R - \widetilde R\|_{L^2(0,2\pi)}^2.
\end{eqnarray*}
Hence $\|R - \widetilde R\|_{L^2(0,2\pi)} \le C \|F(R) - F(\widetilde R)\|_{L^2(D)}$ for all $R,\widetilde R \in D(F)$, i.e., 
the operator $F$ satisfies a conditional Lipschitz stability estimate.
The assertions of the theorem then follow from Theorems~2.1 and 2.2 in \cite{EggerHofmann18} with $a=0$, $s=2$, $u=k$, and $\gamma=1$.
\end{proof}
\begin{remark} \rm \label{rem:errorbound}
Note that only a simple smoothness condition for the function $R^\dag$, and thus on the domain $\Omega^\dag$, was required to derive a quantitative convergence result here. 
If in addition, an upper bound $\|R^\dag\|_{H^4(0,2\pi)} \le C$ is available, then one obtains
\begin{equation}
\|R_\alpha^\delta - R^\dag\|_{H^2(0,2\pi)} \le \delta_R, 
\end{equation}
with $\delta_R = C \delta^{1/2}$ and we may assume that the bound 
$\delta_R$ is known for further computations.
The results of \cite{EggerHofmann18} even provide more general estimates of the form
\begin{equation*}
\|R_\alpha^\delta - R^\dag\|_{H^r} \le C \delta^{1-r/k}, \qquad 0 \le r \le s,
\end{equation*}
if regularization is performed in the norm of $H^{s}(0,2\pi)$ for $2 \le s \le k/2$  and with regularization parameter $\alpha=\delta^{2s/k}$.
Hence, convergence rates arbitrarily close to one can, in principle, be obtained in any desired norm if the true solution $R^\dag$ 
is sufficiently smooth and the regularization norm is chosen sufficiently strong.
\end{remark}

\section{Velocity approximation}

Let us recall from equation \eref{eq:OmegaR} the definition of a domain $\Omega_R$ parametrized by a radius function $R \in D(F)$. Using the scaling transformation
\begin{eqnarray}
\label{eq:trafo}
\phi_R : (r \cos \varphi,r\sin\varphi) &\mapsto (r_0\,r + (R(\varphi) - r_0)\,r^\eta) \cdot (\cos \varphi, \sin \varphi), 
\end{eqnarray}
with $\eta \ge k \ge 2$ and $k$ as in the previous section, we can express $\Omega_R$ equivalently as image $\Omega_R = \phi_R(B)$ of the unit disk $B = \{ x \in \mathbb{R}^2 : |x|<1\}$. 
Some important properties of this transformation are derived in Appendix~A.
In the following, we assume that
\begin{eqnarray} \label{eq:ass2}
\fl \qquad \qquad 
R^\dag,R_\alpha^\delta \in D(F), \quad R^\dag \in H^4_{per}(0,2\pi), \quad \mbox{and} \quad 
\|R_\alpha^\delta - R^\dag\|_{H^2(0,2\pi)} \le \delta_R, 
\end{eqnarray}
with known bound $\delta_R \le C$; see the discussion in Remark~\ref{rem:errorbound} 
For ease of notation, we write $\phi^\dag = \phi_{R^\dag}$, 
$\phi_\alpha^\delta = \phi_{R_\alpha^\delta}$, 
and denote by $\Omega^\dag = \phi^\dag(B)$ and $\Omega_\alpha^\delta = \phi_\alpha^\delta(B)$ the corresponding domains parametrized by the radius functions $R^\dag$ and $R_\alpha^\delta$, respectively. 

The velocity reconstruction from noisy velocity data $u^\epsilon$ can then be formulated 
compactly as a linear inverse problem over the reference domain, i.e.,
\begin{equation} \label{eq:ip2}
T v = u^\epsilon \circ \phi_\alpha^\delta \qquad \mbox{in } B,
\end{equation}
with operator $T : H^1_0(B)\cap H^2(B) \to L^2(B)$ defined by $T(v)=v$, i.e., as simple embedding between Sobolev spaces. 
We assume that a bound on the data perturbations 
\begin{equation} \label{eq:noise2}
\|u^\dag - u^\epsilon\|_{L^2(D)} \le \epsilon
\end{equation}
is available, where $u^\dag$ denotes the true velocity field to be reconstructed,
and we require that $u^\dag$ is piecewise smooth and vanishes outside 
$\Omega^\dag$, i.e., 
\begin{eqnarray} \label{eq:ass3}
u^\dag \in H^1(D), \qquad  u^\dag|_{\Omega^\dag} \in  H^3(\Omega^\dag), \quad 
\mbox{and} \quad u^\dag|_{D \setminus \Omega^\dag} \equiv 0. 
\end{eqnarray}
Let us note that these assumptions are reasonable, if the flow domain $\Omega^\dag$ is smooth.
\begin{remark} \rm \label{rem:noise}
Observe that information about zero velocity at the boundary of the flow domain is encoded explicitly into the definition of the operator $T$.
The particular formulation \eref{eq:ip2} over the reference domain $B$ is used for the following reason:
Inexact knowledge about the flow domain is shifted to the data $u^\epsilon \circ \phi_\alpha^\delta$ and the operator $T$, therefore, is independent of the geometry reconstruction. This simplifies the analysis given in the following. 
Let us note that invertibility of the transformation $\phi_\alpha^\delta$ is guaranteed by Lemma~\ref{lem:a4}, which allows to associate to any approximate solution $v$ of equation \eref{eq:ip2} a velocity field $u = v \circ (\phi_\alpha^\delta)^{-1}$ in the physical domain. 
\end{remark}

For the stable solution of the nonlinear inverse problem \eref{eq:ip2}, we again consider Tikhonov regularization and we define the regularized approximations by
\begin{equation} \label{eq:tik2}
v_{\alpha,\beta}^{\delta,\epsilon} = \arg\min_{v \in D(T)} \|T v - u^\epsilon \circ \phi_\alpha^\delta\|^2_{L^2(B)} + \beta \|\Delta v\|^2_{L^2(B)}, 
\end{equation}
where $D(T)=H^2(B) \cap H_0^1(B)$ by definition. 
For our further analysis, we will require two auxiliary results that we will present next. 
As a first step, we derive an estimate for the data error in the reference domain. 
\begin{lemma} \label{lem:aux1}
Let the assumptions \eref{eq:ass2}, \eref{eq:noise2} and \eref{eq:ass3} hold. 
Then 
\begin{equation}
\label{eq:error_velocity}
\|u^\dag \circ \phi^\dag - u^\epsilon \circ \phi_\alpha^\delta\|_{L^2(B)} 
\le \delta_U 
\end{equation}
with $\delta_U = C (\delta_R^{1/2}\|u^\dag\|_{H^3(\Omega^\dag)} + \epsilon)$ and a constant $C$ that can be computed explicitly. 
\end{lemma}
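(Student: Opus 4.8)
The plan is to split the difference $u^\dag \circ \phi^\dag - u^\epsilon \circ \phi_\alpha^\delta$ by inserting the intermediate term $u^\dag \circ \phi_\alpha^\delta$ and applying the triangle inequality. This gives two contributions: a data-noise term $\|(u^\dag - u^\epsilon) \circ \phi_\alpha^\delta\|_{L^2(B)}$ and a geometry-perturbation term $\|u^\dag \circ \phi^\dag - u^\dag \circ \phi_\alpha^\delta\|_{L^2(B)}$. The first term will be handled by a change of variables back to the physical domain: pulling the integral over $B$ forward through $\phi_\alpha^\delta$ turns it into an integral over $\Omega_\alpha^\delta \subset D$ weighted by the Jacobian of $(\phi_\alpha^\delta)^{-1}$, and using the uniform bounds on that Jacobian (which I expect are established in Appendix~A, together with the invertibility from Lemma~\ref{lem:a4}) together with the noise bound \eref{eq:noise2} yields a contribution of order $C\epsilon$.

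The more delicate second term measures how $u^\dag$ changes when composed with two nearby transformations. Here I would exploit the fact that both $\phi^\dag$ and $\phi_\alpha^\delta$ share the same angular structure \eref{eq:trafo} and differ only through the radius functions $R^\dag$ and $R_\alpha^\delta$, whose difference is controlled by $\delta_R$ in $H^2(0,2\pi)$, hence uniformly in the sup-norm by Sobolev embedding. The natural estimate is to write $u^\dag \circ \phi^\dag - u^\dag \circ \phi_\alpha^\delta$ as the integral of the gradient of $u^\dag$ along the segment connecting the two image points, so that the pointwise difference is bounded by $\|\nabla u^\dag\|$ times $|\phi^\dag - \phi_\alpha^\delta|$, the latter being of order $\|R^\dag - R_\alpha^\delta\|_\infty \le C\delta_R$. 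Integrating and changing variables then gives a bound involving $\|u^\dag\|_{H^1}$ and $\delta_R$.

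The main obstacle is reconciling this naive estimate with the claimed rate $\delta_R^{1/2}$ and the appearance of the stronger norm $\|u^\dag\|_{H^3(\Omega^\dag)}$ rather than $\|u^\dag\|_{H^1}$. A direct mean-value bound would yield $\delta_R$ times a first-order norm, which is the wrong scaling; the half-power and the higher Sobolev index strongly suggest that the transport segment crosses the flow boundary $\partial\Omega^\dag$, where $u^\dag$ is only piecewise smooth and its gradient jumps (indeed $u^\dag$ vanishes outside $\Omega^\dag$ by \eref{eq:ass3}). The careful argument must therefore treat the thin region swept out between $\partial\Omega^\dag$ and $\partial\Omega_\alpha^\delta$ separately: on a boundary layer of width $O(\delta_R)$ the integrand is bounded but the domain is small, and optimizing the split between the layer and the interior produces the $\delta_R^{1/2}$ rate while forcing the use of a higher-order norm (via a trace or interpolation inequality) to control $u^\dag$ up to the boundary. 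I would expect the exponent $\eta \ge k$ in \eref{eq:trafo}, chosen precisely to flatten the transformation near $r=0$ and control derivatives near the boundary, to enter here as well.

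Once both contributions are assembled, combining the $C\epsilon$ and $C\delta_R^{1/2}\|u^\dag\|_{H^3(\Omega^\dag)}$ bounds gives exactly $\delta_U = C(\delta_R^{1/2}\|u^\dag\|_{H^3(\Omega^\dag)} + \epsilon)$, with the constant $C$ traceable through the explicit Jacobian bounds of Appendix~A.
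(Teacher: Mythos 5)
Your argument is correct and rests on the same three ingredients as the paper's proof: the Jacobian bounds of Lemma~\ref{lem:a4} to absorb the change of variables, a Lipschitz (mean-value) estimate of order $\delta_R$ on the region where $u^\dag$ is smooth, and an $L^\infty$ bound times the square root of the measure of the geometric mismatch region, whose area is $O(\delta_R)$ by Lemma~\ref{lem:a1} --- which is exactly where the $\delta_R^{1/2}$ and the need for a norm controlling $\|u^\dag\|_{L^\infty}$ and $\|\nabla u^\dag\|_{L^\infty}$ come from. You identified this correctly and rightly rejected the naive global mean-value bound. The only genuine difference is where the intermediate term is inserted: you compare $u^\dag\circ\phi^\dag$ with $u^\dag\circ\phi_\alpha^\delta$ on $B$, so your bad set is $\{x\in B:\ \phi_\alpha^\delta(x)\notin\Omega^\dag\}$ and your smooth-region estimate uses the difference of the \emph{forward} maps, $\|\phi^\dag-\phi_\alpha^\delta\|_{L^\infty(B)}\le C\delta_R$ (Lemma~\ref{lem:a3}); the paper instead pulls the entire expression to the physical domain first, compares $u^\dag\circ\phi^\dag\circ(\phi_\alpha^\delta)^{-1}$ with $u^\dag$ on $\Omega_\alpha^\delta$, splits along $\Omega_\alpha^\delta\cap\Omega^\dag$ versus $\Omega_\alpha^\delta\setminus\Omega^\dag$, and invokes the bound on the difference of the \emph{inverse} maps (Lemma~\ref{lem:a5}); there the Lipschitz step takes place entirely on the convex reference domain $B$, where $v^\dag=u^\dag\circ\phi^\dag$ is globally $W^{1,\infty}$, so no segment ever crosses a discontinuity. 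Your variant is marginally more elementary, at the cost of having to verify that your bad set in $B$ has measure $O(\delta_R)$, which follows from Lemma~\ref{lem:a1} together with the lower bound on $\det J_\alpha^\delta$, so nothing essential is missing. Two cosmetic points: the $H^3$ norm enters only through the embedding $H^3(\Omega^\dag)\hookrightarrow W^{1,\infty}(\Omega^\dag)$ (no trace or interpolation inequality is needed), and the exponent $\eta$ plays no role in this lemma beyond ensuring smoothness of $\phi_R$.
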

\begin{proof}
By the integral transformation theorem we have
\begin{eqnarray*}
\fl \qquad
\|u^\dag \circ \phi^\dag - u^\epsilon \circ \phi_\alpha^\delta\|_{L^2(B)}
&\le&  \|\det (J^\delta_\alpha)^{-1}\|_{L^\infty(B)}\;\| u^\dag \circ \phi^\dag \circ (\phi_\alpha^\delta)^{-1} - u^\epsilon\|_{L^2(\Omega_\alpha^\delta)}\\
&\le& C (\| u^\dag \circ\ \phi^\dag \circ (\phi_\alpha^\delta)^{-1} - u^\dag\|_{L^2(\Omega_\alpha^\delta)}\;+\;\|u^\dag - u^\epsilon\|_{L^2(\Omega_\alpha^\delta)}), 
\end{eqnarray*}
where we used Lemma~\ref{lem:a4} to estimate the Jacobian $J_\alpha^\delta$ of the transformation $\phi_\alpha^\delta$ in the second step.
The last term can then be readily estimated by the bound \eref{eq:noise2} on the data noise.
The first term on the right hand side can further be split as
\begin{eqnarray*}
\fl \qquad
\|u^\dag\circ\phi^\dag\circ (\phi_\alpha^\delta)^{-1} - u^\dag\|_{L^2(\Omega_\alpha^\delta)} \\
\fl \qquad
\le\|u^\dag\circ\phi^\dag\circ(\phi_\alpha^\delta)^{-1} - u^\dag\|_{L^2(\Omega_\alpha^\delta\cap\,\Omega^\dag)} + \|u^\dag\circ\phi^\dag\circ(\phi_\alpha^\delta)^{-1}\|_{L^2(\Omega_\alpha^\delta\setminus\Omega^\dag)} = (i)+(ii),
\end{eqnarray*}
where we used $u^\dag\equiv 0$ in $\Omega_\alpha^\delta\setminus\Omega^\dag$ 
which follows from assumption \eref{eq:ass3}. 
From the smoothness of $R_\alpha^\delta$, $R^\dag$, and $u^\dag$, 
and using the bound on the difference of the inverse transformations $(\phi_\alpha^\delta)^{-1}$ and $(\phi^\dag)^{-1}$ provided by Lemma~\ref{lem:a5}, we then obtain
\begin{eqnarray*}
(i) &=& 
\|u^\dag\circ\phi^\dag\circ(\phi_\alpha^\delta)^{-1} - u^\dag\circ\phi^\dag\circ(\phi^\dag)^{-1}\|_{L^2(\Omega _\alpha^\delta\cap\,\Omega^\dag)}\\
&\le& \|u^\dag\circ\phi^\dag\|_{W^{1,\infty}(B)}\;\|(\phi_\alpha^\delta)^{-1} - (\phi^\dag)^{-1}\|_{L^\infty(\Omega_\alpha^\delta\cap\,\Omega^\dagger)}\;|\Omega_\alpha^\delta\cap\Omega^\dag|^{1/2}\\
&\le& C \|\phi^\dag\|_{W^{1,\infty}(B)}\;\|u^\dag\|_{H^3(\Omega^\dag)}\;\delta_R
\le C \Vert u^\dag\Vert_{H^3(\Omega^\dag)}\;\delta_R.
\end{eqnarray*}
Observe that $\|\phi^\dag\|_{W^{1,\infty}(B)} \le C \|R^\dag\|_{W^{1,\infty}(0,2\pi)} \le C'\|R^\dag\|_{H^2(0,2\pi)}$ by definition \eref{eq:trafo} of the transformation and continuous embedding.
Using Lemma~\ref{lem:a1} and assumption \eref{eq:ass2}, we can control the area $|\Omega_\alpha^\delta \setminus \Omega^\dag|$ of the geometry mismatch by $\delta_R$, which allows us to bound the 
remaining term in the above estimate by
\begin{eqnarray*}
\| u^\dag\circ\phi^\dag\circ(\phi_\alpha^\delta)^{-1}\|_{L^2(\Omega_\alpha^\delta\setminus\Omega^\dag)}
\le
\Vert u^\dag\Vert_{L^\infty(D)}\,|\Omega_\alpha^\delta\setminus\Omega^\dag|^{1/2}
\le
C\,\|u^\dag\|_{H^3(\Omega^\dag)}\,\delta_R^{1/2}.
\end{eqnarray*}
Note that the constants $C$ in the above estimates are generic and independent of 
$\epsilon$ and $\delta$. A combination of the bounds then yields the assertion of the lemma. 
\end{proof}

\begin{remark} \rm \label{rem:errorbound2}
Following Remark~\ref{rem:errorbound} and the arguments used in the proof above, the bound $\delta_U$ in \eref{eq:error_velocity} is computable in terms of the data noise levels $\delta$ and $\epsilon$ in \eref{eq:noise} and \eref{eq:noise2}, if 
bounds on $\|R^\dag\|_{H^4(0,2\pi)}$ and $\|u^\dag\|_{H^3(\Omega^\dag)}$ are available. For our further computations, we may thus assume that $\delta_U$ is known.
\end{remark}

As a next step, we interpret standard smoothness assumptions on $u^\dag$ in terms of the operator $T$, which will allow us to utilized simple source conditions below.
\begin{lemma} \label{lem:aux2}
Let assumptions \eref{eq:ass2} and \eref{eq:ass3} hold and define $v^\dag := u^\dag \circ \phi^\dag$. \\
Then $v^\dag \in R((T^* T)^{\mu})$ for all $\mu < 1/8$. However, $v^\dag \not\in R((T^* T)^{1/8})$, in general.
\end{lemma}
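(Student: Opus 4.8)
The plan is to make the Hilbert-scale structure behind $T$ explicit and to rewrite the source condition $v^\dag\in R((T^*T)^\mu)$ as a smoothness condition measured by the Dirichlet Laplacian. Let $A=-\Delta$ on $B$ with domain $D(A)=H^2(B)\cap H^1_0(B)$, which is self-adjoint, positive, and boundedly invertible; its fractional powers $A^s$ generate a Hilbert scale with $D(A^{1/2})=H^1_0(B)$ and $D(A)=H^2(B)\cap H^1_0(B)$. By definition $X:=D(T)=H^2(B)\cap H^1_0(B)$ carries the inner product $\langle v,w\rangle_X=\langle\Delta v,\Delta w\rangle_{L^2(B)}=\langle Av,Aw\rangle_{L^2(B)}$, while $Y=L^2(B)$. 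Starting from $\langle Tv,w\rangle_{L^2}=\langle v,T^*w\rangle_X$ and integrating by parts twice, I would identify $T^*=A^{-2}$, hence $T^*T=A^{-2}$ as an operator on $X$ and $(T^*T)^\mu=A^{-2\mu}$. A short argument then shows that $v^\dag\in R((T^*T)^\mu)$ is equivalent to $A^{2\mu}v^\dag\in D(A)$, i.e. to $v^\dag\in D(A^{1+2\mu})$; for $\mu=1/8$ this threshold is exactly $D(A^{5/4})$.

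The positive assertion thus reduces to $v^\dag\in D(A^s)$ for every $s<5/4$. Using the regularity and invertibility of $\phi^\dag$ established in Appendix~A (so that composition with $\phi^\dag$ preserves $H^3$-regularity) together with \eref{eq:ass3}, I would first record that $v^\dag=u^\dag\circ\phi^\dag\in H^3(B)\cap H^1_0(B)$, the homogeneous boundary values coming from $u^\dag=0$ on $\partial\Omega^\dag$. I would then invoke the standard interpolation characterization of domains of fractional powers of the Dirichlet Laplacian (Grisvard, Lions--Magenes), namely $D(A^s)=\{v\in H^{2s}(B):v|_{\partial B}=0\}$ for $1/4<s<5/4$, in which only the single boundary condition $v|_{\partial B}=0$ occurs. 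Since $2s<5/2<3$ we have $v^\dag\in H^{2s}(B)$ with $v^\dag|_{\partial B}=0$, whence $v^\dag\in D(A^s)$ for all $s<5/4$ and therefore $v^\dag\in R((T^*T)^\mu)$ for all $\mu<1/8$.

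For the converse I would analyse the borderline $s=5/4$, where the characterization of $D(A^{5/4})$ requires in addition $Av^\dag=-\Delta v^\dag\in D(A^{1/4})=H^{1/2}_{00}(B)$, the Lions--Magenes space. For a function $w\in H^1(B)$ one has $w\in H^{1/2}_{00}(B)$ if and only if its trace vanishes, $w|_{\partial B}=0$; this follows from the Hardy-type characterization $\int_B|w|^2/\mathrm{dist}(x,\partial B)\,dx<\infty$, which fails whenever $w|_{\partial B}\neq0$. Hence $v^\dag\in D(A^{5/4})$ would force $\Delta v^\dag|_{\partial B}=0$. Because $v^\dag|_{\partial B}=0$ makes the tangential derivatives vanish, the boundary value of the Laplacian on the unit circle reduces to $\Delta v^\dag|_{\partial B}=\partial_n^2 v^\dag+\partial_n v^\dag$, and this does not vanish for an admissible velocity with nonzero wall shear stress $\partial_n v^\dag$.

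To turn ``in general'' into a proof I would exhibit one explicit admissible configuration, for instance a radially symmetric parabolic profile $u^\dag$ on a disk $\Omega^\dag$ of radius $r_1$, vanishing outside; then $-\Delta v^\dag$ has a nonzero boundary trace, so $Av^\dag\notin H^{1/2}_{00}(B)$ and $v^\dag\notin D(A^{5/4})=R((T^*T)^{1/8})$. Equivalently, in the reduced one-dimensional radial model the eigenvalues satisfy $\lambda_n\sim(n\pi)^2$ and the nonvanishing second normal derivative forces the coefficients of $v^\dag$ to decay only like $\lambda_n^{-3/2}$, so that $\sum_n\lambda_n^{2s}|c_n|^2$ behaves like $\sum_n n^{4s-6}$, which diverges at $s=5/4$ and converges for every $s<5/4$. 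The main obstacle I anticipate is exactly this sharp endpoint analysis: one must treat the borderline interpolation space $H^{1/2}_{00}(B)$ correctly and identify the nonvanishing of the normal derivative at the wall---rather than the interior $H^3$-regularity---as the precise feature that caps the attainable smoothness at $D(A^{5/4})$, which is the analytic reason the wall shear stress problem is genuinely ill posed.
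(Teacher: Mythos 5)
Your argument is correct and follows essentially the same route as the paper: you identify the adjoint through the bi-Laplacian (equivalently $T^*T=A^{-2}$ for the Dirichlet Laplacian $A$), characterize the fractional ranges as Sobolev spaces with the appropriate boundary conditions, and locate the obstruction at $\mu=1/8$ in the nonvanishing boundary trace of $\Delta v^\dag$. Your explicit handling of the endpoint via $H^{1/2}_{00}(B)$ and the radial counterexample only make precise what the paper delegates to ``interpolation of Sobolev spaces'' and ``in general''.
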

\begin{proof}
We equip $D(T) = H^2(B)\cap H^1_0(B)$ with the norm $\|v\| := \|\Delta v\|_{L^2(B)}$. 
Then for arbitrary $v\in H^2(B)\cap H^1_0(B)$ and $f\in L^2(B)$, we have
\begin{equation*}
(T^*f,v)_{H^2(B) \cap H_0^1(B)}
= (\Delta T^*f,\Delta v)_{L^2(B)} 
= (f,Tv)_{L^2(B)}
=(f,v)_{L^2(B)}.
\end{equation*}
Thus $w = T^* f$ is given as the unique solution of the boundary value problem
\begin{equation*}
\Delta^2 v  =  f \quad  \mbox{in }B 
\qquad \mbox{with} \qquad 
v = 0 \quad \mbox{and} \quad \Delta v  = 0  \qquad  \mbox{on }\partial B.
\end{equation*}
From standard elliptic regularity theory \cite{GilbargTrudinger}, we can conclude that $w = T^*f\in H^4(B)$ for any $f\in L^2(B)$. Using $\mathcal{R}((T^*T)^{1/2}) = \mathcal{R}(T^*)$, see \cite[Thm.~2.6]{EnglHankeNeubauer96}, we thus arrive at
\begin{equation*}
\begin{array}{rcl}
\mathcal{R}((T^*T)^0) & = & \{v\in H^2(B)\mid v\,=\,0\mbox{ on }\partial B\},\\
\mathcal{R}((T^*T)^{1/2}) & = & \{v\in H^4(B)\mid v\,=\Delta v\,=\,0\mbox{ on }\partial B\}.
\end{array}
\end{equation*}
From the regularity assumptions on $u^\dag$ and $R^\dag$, we deduce that $v^\dag \in H^3(B)$ with $v^\dag = 0$ on $\partial B$, but $\Delta v^\dag \neq 0$ on $\partial B$, in general.
By interpolation of Sobolev spaces \cite{Lunardi09}, 
we thus deduce that $v^\dag \in R((T^*T)^\mu)$ for $\mu < 1/8$, but not for $\mu \ge 1/8$, in general. 
\end{proof}
\begin{remark} \rm \label{rem:sobolev} 
A range condition $v^\dag \in \mathcal{R}((T^*T)^\mu)$ would hold with $\mu = 1/4$, if $\Delta v^\dag = 0$ at $\partial B$ would be valid additionally; this can however not be expected in general. 
The limiting factor for the regularity index $\mu$ in the range condition, therefore, is the mismatch of the higher order boundary conditions. 
This could be circumvented by choosing a different equivalent norm on $H^2(B) \cap H_0^1(B)$; 
see \cite{Neubauer88} for details.
\end{remark}

From standard results about Tikhonov regularization in Hilbert spaces for linear inverse problems \cite{EnglHankeNeubauer96}, 
we can now immediately conclude the following results. 
\begin{theorem} \label{thm:velocity}
Let assumptions \eref{eq:ass2}--\eref{eq:ass3} hold 
and let $v_{\alpha,\beta}^{\delta,\epsilon}$ be defined by \eref{eq:tik2} 
with regularization parameter $\beta = \delta_U^{2/(2\mu+1)}$.
Then
\begin{equation}
\|v_{\alpha,\beta}^{\delta,\epsilon} - v^\dag\|_{H^2(B)} \le C \delta_U^{\frac{2\mu}{2\mu+1}}
\quad \mbox{for all} \quad 0 \le \mu < 1/8.
\end{equation}
The same estimates hold for a-posteriori parameter choice by the discrepancy principle
\begin{equation} \label{eq:disc2}
\beta = \max \{ \beta_0\, 2^{-n} : n \ge 0 \mbox{ and such that } \|v_{\alpha,\beta}^{\delta,\epsilon} - v^\dag\| \le 2\delta_U \}.
\end{equation}
\end{theorem}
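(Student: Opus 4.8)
The plan is to recognize the statement as a direct application of the classical convergence-rate theory for Tikhonov regularization of linear ill-posed problems in Hilbert spaces, since all the problem-specific work has already been carried out in Lemmas~\ref{lem:aux1} and~\ref{lem:aux2}. The abstract setting is already prepared: $T$ is a bounded linear operator between the Hilbert spaces $X = H^2(B)\cap H^1_0(B)$, equipped with the norm $\|v\|_X = \|\Delta v\|_{L^2(B)}$ used in the proof of Lemma~\ref{lem:aux2}, and $Y = L^2(B)$. Writing $y^{\delta,\epsilon} := u^\epsilon\circ\phi_\alpha^\delta$ for the data and noting that $T v^\dag = v^\dag = u^\dag\circ\phi^\dag$ since $T$ is the embedding, the minimizer in \eref{eq:tik2} is the standard Tikhonov approximation $v_{\alpha,\beta}^{\delta,\epsilon} = (T^*T + \beta I)^{-1} T^* y^{\delta,\epsilon}$ for the equation $T v = y^{\delta,\epsilon}$.

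First I would assemble the two ingredients required by the classical theory. The data-noise bound is furnished directly by Lemma~\ref{lem:aux1}, which gives $\|T v^\dag - y^{\delta,\epsilon}\|_{L^2(B)} = \|u^\dag\circ\phi^\dag - u^\epsilon\circ\phi_\alpha^\delta\|_{L^2(B)} \le \delta_U$. The source condition is supplied by Lemma~\ref{lem:aux2}, which for each fixed $\mu < 1/8$ yields $v^\dag\in\mathcal{R}((T^*T)^\mu)$, that is $v^\dag = (T^*T)^\mu w$ with $\|w\|_X \le \rho$ for some $\rho$ controlled by $\|u^\dag\|_{H^3(\Omega^\dag)}$ and the geometry bounds. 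These are exactly the hypotheses of the standard rate result.

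Next I would invoke the classical a priori convergence-rate theorem for Tikhonov regularization, cf.\ \cite{EnglHankeNeubauer96}: for $0 < \mu \le 1$ and the choice $\beta \sim (\delta_U/\rho)^{2/(2\mu+1)}$ one has $\|v_{\alpha,\beta}^{\delta,\epsilon} - v^\dag\|_X \le C\,\rho^{1/(2\mu+1)}\,\delta_U^{2\mu/(2\mu+1)}$. With the prescribed $\beta = \delta_U^{2/(2\mu+1)}$, absorbing the fixed $\rho$ into the constant, this is precisely the asserted bound in the $X$-norm. It then remains only to pass from $\|\cdot\|_X$ to $\|\cdot\|_{H^2(B)}$: since $\Delta : H^2(B)\cap H^1_0(B)\to L^2(B)$ is a bounded isomorphism by elliptic regularity, $\|v\|_X = \|\Delta v\|_{L^2(B)}$ is equivalent to $\|v\|_{H^2(B)}$ on $D(T)$, which upgrades the estimate to the stated $H^2(B)$-bound. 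Because the prescribed range $0\le\mu<1/8$ lies strictly below the saturation index $\mu = 1$ of Tikhonov regularization, no saturation obstruction arises.

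For the a posteriori choice \eref{eq:disc2} I would appeal to the order optimality of the discrepancy principle, which reproduces the rate $\delta_U^{2\mu/(2\mu+1)}$ for all $\mu$ below its saturation level $1/2$; since $\mu<1/8<1/2$, the same argument and the same norm equivalence apply. I expect essentially no genuine obstacle in this theorem, as its content is entirely carried by the two preceding lemmas. The only points requiring care are bookkeeping ones: ensuring that the adjoint $T^*$ and the fractional powers $(T^*T)^\mu$ are consistently taken with respect to the $X$-inner product of Lemma~\ref{lem:aux2} (so that the source condition matches the operator in \eref{eq:tik2}), and confirming the norm equivalence $\|\Delta\cdot\|_{L^2(B)}\simeq\|\cdot\|_{H^2(B)}$ so that the abstract estimate actually delivers convergence in $H^2(B)$.
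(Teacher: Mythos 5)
Your proposal is correct and follows essentially the same route as the paper, which states the theorem as an immediate consequence of the standard a~priori and discrepancy-principle rate results for linear Tikhonov regularization in \cite{EnglHankeNeubauer96}, with the data-noise bound supplied by Lemma~\ref{lem:aux1} and the source condition by Lemma~\ref{lem:aux2}. Your additional care about the norm equivalence $\|\Delta\cdot\|_{L^2(B)}\simeq\|\cdot\|_{H^2(B)}$ and the saturation indices is sound bookkeeping that the paper leaves implicit.
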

Corresponding bounds for the error in the velocity $u$ can be obtained, in principle, by back transformation into physical domain and some elementary computations. 
Let us close this section with a remark indicating some natural generalizations.

\begin{remark} \label{rem:errorbound3} \label{rem:physical} \rm
If $u^\dag$ is sufficiently smooth and the functional in \eref{eq:tik2} is replaced by 
\begin{equation*}
\|T v - u^\epsilon \circ \phi_\alpha^\delta\|^2_{L^2(B)} + \beta \|\Delta^t v\|^2_{L^2(B)},
\end{equation*}
with $T: D(T) \subset H^{2t}(B) \cap H_0^1(B) \to L^2(B)$ defined by $T v = v$ and $t \ge 1$ sufficiently large, 
one could, in principle, obtain any rate $2\mu/(2\mu+1)$ sufficiently close to one.
Further note that the data residual could also be measured in physical space. The regularized approximate solution is then defined by 
\begin{equation*}
\widetilde v_{\alpha,\beta}^{\delta,\epsilon} = \arg\min_{v \in D(\widetilde T)} \|\widetilde T v - u^\epsilon\|^2_{L^2(\Omega_\alpha^\delta)} + \beta \|\Delta^t v\|^2_{L^2(B)},
\end{equation*}
with operator $\widetilde T : H^2(B) \cap H_0^1(B) \to L^2(\Omega_\alpha^\delta)$ given by $\widetilde T v = v \circ (\phi_\alpha^\delta)^{-1}$. 
This simply amounts to a change to an equivalent norm in the data space. 
A quick inspection of the arguments in the above proof reveals that the assertions of Theorem~\ref{thm:velocity} remain valid also for this choice of regularization method, which is more easy to implement and will thus be used in our numerical tests in Section~\ref{sec:num}.
\end{remark}

\section{Computation of the wall shear stress}

Let $R\in\mathcal{D}(F)$ be a given radius function with associated transformation $\phi_R$ and 
let $v$ be a given velocity field defined on the reference domain $B$. 
For ease of notation, we assume that fluid viscosity is normalized, and define the associated wall shear stress by 
\begin{equation} \label{eq:wss}
\tau_R(v)(\varphi) =  - n_R(\varphi) \cdot J_R(\cos \varphi,\sin \varphi)^{-1} \cdot (\nabla\,v)(\cos \varphi, \sin \varphi),
\end{equation}
where $J_R$ is the Jacobian of $\phi_R$, and $n_R(\varphi)$ is the outward pointing unit normal vector at the corresponding point $(R(\varphi) \cos \varphi, R(\varphi) \sin \varphi) \in \partial\Omega_R$ in the physical domain. 
Let us note that $n_R(\varphi)$ can be expressed explicitly by 
\begin{equation} \label{eq:normal}
n_R(\varphi) =  \frac{(R(\varphi) \cos\varphi + R'(\varphi) \sin\varphi, R(\varphi) \sin\varphi - R'(\varphi) \cos\varphi)}{\sqrt{(R(\varphi))^2+(R'(\varphi))^2}}.
\end{equation}
For ease of notation, we will identify $\partial B$ with the interval $(0,2\pi)$ in the sequel.
In addition, we again define symbols $\tau_{\alpha,\beta}^{\delta,\epsilon} = \tau_{R_\alpha^\delta}(v_{\alpha,\beta}^{\delta,\epsilon})$ and $\tau^\dag = \tau_{R^\dag}(v^\dag)$ where $v^\dag = u^\dag \circ \phi^\dag$.
A combination of the results derived so far and some elementary geometric computations 
now leads to the following assertion.
\begin{theorem}
Let the assumptions of Theorem 3.7 hold. Then
\begin{equation}
\|\tau_{\alpha,\beta}^{\delta,\epsilon} - \tau^\dag\|_{L^2(0,2\pi)} \le C (\delta_R + \delta_U^{(2\mu)/(2\mu+1)}) \quad \mbox{for all} \quad 0 \le \mu < 1/8.
\end{equation}
\end{theorem}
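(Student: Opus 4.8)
The plan is to introduce the intermediate quantity $\tau_{R_\alpha^\delta}(v^\dag)$ and to split the error by the triangle inequality into a velocity part and a geometry part,
\begin{equation*}
\|\tau_{\alpha,\beta}^{\delta,\epsilon} - \tau^\dag\|_{L^2(0,2\pi)}
\le \|\tau_{R_\alpha^\delta}(v_{\alpha,\beta}^{\delta,\epsilon}) - \tau_{R_\alpha^\delta}(v^\dag)\|_{L^2(0,2\pi)}
 + \|\tau_{R_\alpha^\delta}(v^\dag) - \tau_{R^\dag}(v^\dag)\|_{L^2(0,2\pi)} ,
\end{equation*}
and then to bound the first summand by $C\,\delta_U^{2\mu/(2\mu+1)}$ and the second by $C\,\delta_R$.

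For the velocity part I would use that the map $v\mapsto\tau_R(v)$ is linear in $\nabla v$ by \eref{eq:wss}, so that this term equals $\|\tau_{R_\alpha^\delta}(v_{\alpha,\beta}^{\delta,\epsilon}-v^\dag)\|_{L^2(0,2\pi)}$. Since $R_\alpha^\delta\in D(F)$ satisfies $\|R_\alpha^\delta\|_{H^2(0,2\pi)}\le C$ by \eref{eq:ass2}, the geometric factors $n_{R_\alpha^\delta}$ and $J_{R_\alpha^\delta}(\cos\varphi,\sin\varphi)^{-1}$ are uniformly bounded in $L^\infty(0,2\pi)$, the latter via the uniform lower bound on $\det J_{R_\alpha^\delta}$ from Lemma~\ref{lem:a4}. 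Hence this term is controlled by $C\,\|\nabla(v_{\alpha,\beta}^{\delta,\epsilon}-v^\dag)|_{\partial B}\|_{L^2(\partial B)}$, and the trace estimate $\|\nabla w|_{\partial B}\|_{L^2(\partial B)}\le C\|w\|_{H^2(B)}$ together with Theorem~\ref{thm:velocity} yields the claimed bound $C\,\delta_U^{2\mu/(2\mu+1)}$.

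For the geometry part the velocity $v^\dag=u^\dag\circ\phi^\dag$ is fixed and lies in $H^3(B)$ by Lemma~\ref{lem:aux2}, so that $\nabla v^\dag|_{\partial B}\in L^\infty(\partial B)$ by the trace theorem. Writing $\tau_R(v^\dag)=-(n_R\cdot J_R^{-1})\,\nabla v^\dag$ and factoring
\begin{equation*}
n_{R_\alpha^\delta} J_{R_\alpha^\delta}^{-1} - n_{R^\dag} J_{R^\dag}^{-1}
= (n_{R_\alpha^\delta}-n_{R^\dag})\,J_{R_\alpha^\delta}^{-1}
 + n_{R^\dag}\,(J_{R_\alpha^\delta}^{-1}-J_{R^\dag}^{-1}),
\end{equation*}
I would estimate each factor separately. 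The explicit formula \eref{eq:normal} shows that $n_R$ is a smooth function of $(R,R')$ with denominator bounded below by $r_0>0$, hence Lipschitz on the admissible class, giving $\|n_{R_\alpha^\delta}-n_{R^\dag}\|_{L^2(0,2\pi)}\le C\|R_\alpha^\delta-R^\dag\|_{H^1(0,2\pi)}$. A direct computation of $J_R$ at $r=1$ from \eref{eq:trafo} shows that it, too, depends only on $(R,R')$ and, using the appendix lemmas, that $J_R^{-1}$ is Lipschitz in $(R,R')$, so that $\|J_{R_\alpha^\delta}^{-1}-J_{R^\dag}^{-1}\|_{L^2(0,2\pi)}\le C\|R_\alpha^\delta-R^\dag\|_{H^1(0,2\pi)}$. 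Multiplying by the fixed bounded factor $\nabla v^\dag|_{\partial B}$ and using $\|R_\alpha^\delta-R^\dag\|_{H^1}\le\|R_\alpha^\delta-R^\dag\|_{H^2}\le\delta_R$ from \eref{eq:ass2}, the geometry part is bounded by $C\,\delta_R$, and combining the two parts finishes the proof.

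The step I expect to be the main obstacle is the geometry part, specifically verifying that the inverse Jacobian $J_R^{-1}$ evaluated on $\partial B$ depends Lipschitz-continuously on the radius function in the $H^1$-to-$L^2$ sense. This requires a uniform lower bound on $\det J_R$ over the admissible set and control of the first-order dependence of $J_R$ on $R'$, both of which are exactly the kind of estimates supplied by the appendix lemmas; the remaining manipulations are the elementary geometric computations announced before the statement.
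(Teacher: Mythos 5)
Your proposal is correct and follows essentially the same route as the paper's proof: the same telescoping of the product $n_R J_R^{-1}\nabla v$ into a normal-vector difference, a Jacobian-inverse difference, and a velocity-gradient difference, controlled by Lemmas~\ref{lem:a2}, \ref{lem:a4}, \ref{lem:a5}, the trace theorem, and Theorem~\ref{thm:velocity}. The only cosmetic difference is the order of the splitting (your geometry terms carry the fixed factor $\nabla v^\dag$, the paper's carry $\nabla v_{\alpha,\beta}^{\delta,\epsilon}$), and both versions close since each gradient is bounded on $\partial B$.
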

\begin{proof}
We use the definition of $\tau_R(v)$ and decompose the error into the three parts
\begin{eqnarray*}
\fl\qquad
\|\tau_{\alpha,\beta}^{\delta,\epsilon} - \tau^\dag\|_{L^2(0,2\pi)}
= \|n_{R_\alpha^\delta} J_{R_\alpha^\delta}^{-1} \nabla v_{\alpha,\beta}^{\delta,\epsilon} - n_{R^\dag}\,J_{R^\dag}^{-1}\,\nabla v^\dag\|_{L^2(\partial B)}\\
\fl\qquad\qquad
\le \|(n_{R_\alpha^\delta} - n_{R^\dag}) J_{R_\alpha^\delta}^{-1} \nabla v_{\alpha,\beta}^{\delta,\epsilon}\|_{L^2(\partial B))}
  + \|n_{R^\dag} (J_{R_\alpha^\delta}^{-1} - J_{R^\dag}^{-1})\nabla v_{\alpha,\beta}^{\delta,\epsilon}\|_{L^2(\partial B))} \\
\qquad\qquad\qquad  + \|n_{R^\dag} J_{R^\dag}^{-1} (\nabla v_{\alpha,\beta}^{\delta,\epsilon} - \nabla v^\dag)\|_{L^2(\partial B))} 
= (i) + (ii) + (iii).
\end{eqnarray*}
Using Lemmas~\ref{lem:a2} and \ref{lem:a3}, and the embedding of Sobolev spaces, we obtain
\begin{eqnarray*}
(i) &\le& \|n_{R_\alpha^\delta} - n_{R^\dag}\|_{L^\infty(\partial B))}\|J_{R_\alpha^\delta}^{-1}\|_{L^\infty(\partial B))}\|\nabla v_{\alpha,\beta}^{\delta,\epsilon}\|_{L^2(\partial B))}\\
&\le& C \| R_\alpha^\delta - R^\dag\|_{H^2(0,2\pi)} \|R_\alpha^\delta\|_{H^2(0,2\pi)}\;\|v_{\alpha,\beta}^{\delta,\epsilon}\|_{H^2(B)}
\le C\delta_R.
\end{eqnarray*}
By the geometric arguments of Lemma~\ref{lem:a5}, the second term can be estimated by
\begin{eqnarray*}
(ii) &\leq& \|n_{R^\dagger}\|_{L^\infty(\partial B))}\,\|J_{R_\alpha^\delta}^{-1} - J_{R^\dag}^{-1}\|_{L^\infty(\partial B))}\,\|\nabla v_{\alpha,\beta}^{\delta,\epsilon}\|_{L^2(\partial B))}\\
&\le& C \|R_\alpha^\delta - R^\dag\|_{H^2(0,2\pi)}\;\|v_{\alpha,\beta}^{\delta,\epsilon}\|_{H^2(B)}
\le C\delta_R.
\end{eqnarray*}
With the help of the results of the previous section, the third term, which measures the amplification of the velocity approximation error, can be bounded by
\begin{eqnarray*}
(iii) &\le& \|n_{R^\dag}\|_{L^\infty(\partial B))} \|J_{R^\dag}^{-1}\|_{L^\infty(\partial B))} \|\nabla v_{\alpha,\beta}^{\delta,\epsilon} - \nabla v^\dag\|_{L^2(\partial B))}\\
&\le& C\;\| R^\dag\|_{H^2(0,2\pi)} \|v_{\alpha,\beta}^{\delta,\epsilon}-v^\dag\|_{H^2(B)}
\le C \delta_U^{(2\mu)/(2\mu+1)}.
\end{eqnarray*}
The assertion of the theorem then follows by combination of these estimates.
\end{proof}
\begin{remark} \rm
Let us note that, following the considerations of Remark~\ref{rem:errorbound} and \ref{rem:errorbound3}, one could in principle again obtain 
convergence rates arbitrarily close to one, if the true flow geometry and velocity are sufficiently smooth and the regularization 
terms in the Tikhonov functionals \eref{eq:tik1} and \eref{eq:tik2} are chosen sufficiently strong. The main observation of the previous theorem therefore is, that it is possible to obtain a quantitative estimate under reasonable smoothness assumptions.
\end{remark}

\section{Remarks on the extension to three dimensions} \label{sec:3d}

For a three-dimensional flow, the wall shear stress is a tensor defined by
\begin{equation*}
\tau=-\mu\left(\nabla u + (\nabla u)^T\right) \cdot n,
\end{equation*}
where $\mu$ is the dynamic viscosity, $u$ is the velocity vector, and $n$ the outer unit normal vector at the vessel wall. 
If appropriate measurements of the density and of all three velocity components are available, then the reconstruction approach and the theoretical results presented in the previous sections can be generalized almost verbatim to the three dimensional setting; only the computational realization becomes more complicated. 
Note that the geometry and velocity reconstruction naturally decompose into a sequence of two-dimensional inverse problems for the individual cross-sections for fixed coordinate $z$ in the flow direction. 
To ensure continuity of the reconstructions with respect to the $z$-coordinate, one has to employ regularization also in the $z$-direction,
which fully couples the problems for the individual cross-sections. 
The additional computational complexity due to this coupling can be overcome by a Kaczmarz strategy \cite{KaltenbacherEtAl08,Rieder14}, 
which allows to reduce the numerical solution to the iterated solution of two-dimensional problems for the individual cross-sections.
A detailed investigation of these computational aspects is, however, not in the scope of the current paper and will be given elsewhere.

\section{Numerical validation} \label{sec:num}

In order to demonstrate the viability of the proposed approach, we now report about the reconstruction of flow geometry, flow velocity,
and wall shear stress from experimental data obtained in a clinical whole-body 3 Tesla magnetic resonance imaging scanner (Prisma, Siemens Healthcare, Erlangen) at the University Medical Center Freiburg. 
As a test case, we consider the flow of water through a cylindrical pipe with constant diameter $d=25.885$mm at a constant flow rate 
of about $6$l/min and a temperature of about $22^\circ$C; details about the experimental setup are described in \cite{BauerEtAl18b}.
Let us note that the flow conditions lead to a Reynolds number of about $5300$ and thus amount to a turbulent flow with steep velocity gradients 
in the boundary layer. 
For our reconstruction procedures, we utilize magnetic resonance images of density and the axial flow velocity acquired with different resolutions corresponding to in plane voxel sizes of $h=1.5$mm to $h=0.3$mm.
Due to the axisymmetic geometry, we expect an almost radially symmetric flow, but this a-priori knowledge will not be utilized in our computational tests. 

Up to a simple translation, the exact geometry $\Omega^\dag=\phi^\dag(B)$ of the pipe is known here. A reference solution $u^{ref}$ for the flow velocity can thus be computed by numerical simulation \cite{Khoury13} and will be used for comparison with our results. 
Let us emphasize that this reference solution represents a time averaged velocity field, in which all temporal fluctuations are suppressed. The experimental data, on the other hand, contain such turbulent fluctuations; see Figure~\ref{fig:vel_raw}. 
In addition, also the flow conditions, e.g., temperature and flow rate, do not match exactly with the simulation data. One therefore cannot expect to get a perfect fit to the simulated reference velocity data.
%

\subsection{Geometry identification} 

For the geometry identification, we utilize the standard magnetic resonance images of the density. After a simple scaling procedure, see Appendix~B for details, the data can be interpreted as
\begin{equation*}
\fl\qquad\qquad
m^\delta|_{V_i} \; = \; \frac{1}{|V_i|}\,\int\limits_{V_i}\,\chi_{\Omega^\dag}(x)\;dx\;+\;\mbox{noise},
\end{equation*}
where $V_i$ denotes the $i$th voxel of the measurement array of size $h\!\times\!h$.
The action of the forward operator for our computational tests is then defined as
\begin{equation}\label{eq:discForw}
\fl\qquad\qquad
F_{\gamma,h}(R)|_{V_i} = \frac{1}{|V_i|}\;\int\limits_{V_i}\,H_\gamma(R(\varphi(\xi))\,-\,|\xi|)\,d\xi.
\end{equation}
where $H_\gamma(x)= \frac{1}{\pi}\arctan\left(\frac{x}{\gamma}\right) + \frac12$, $\gamma>0$, is a smooth approximation of the Heaviside function.
In our computational tests, we thus minimize the Tikhonov functional
\begin{equation*}
\fl\qquad\qquad
J_{\alpha,\gamma}^\delta(R) = \|F_{\gamma,h}(R) - m^\delta\|_{L^2(D)}^2\;+\;\alpha\,\|R\|_{H^2(0,2\pi)}^2
\end{equation*}
over the finite dimensional space of radius functions 
\begin{equation*}
\fl\qquad
V_N =\{R_N\in H^2_{per}(0,2\pi)\,|\,R_N(\varphi) = b_0 + \sum\limits_{k=1}^Na_k\sin(k\varphi)+b_k\cos(k\varphi)\} \cap D(F).
\end{equation*}
Due to the smoothing with $\gamma>0$, the forward operator $F_{h,\gamma}$ is continuously differentiable and a projected 
Gauss--Newton method \cite{EggerLeitao09,KaltenbacherEtAl08} can be used for the minimization process.

For our computational tests, the center of the coordinate system was chosen as the barycenter of the measurement data $m^\delta$
and the true radius was $R^\dag=12.9425$mm.  
In Table~\ref{tab:geo}, we display the relative errors $\|R_{\alpha}^\delta - R^\dag\|_{H^2(0,2\pi)} / \|R^\dag\|_{H^2(0,2\pi)}$ obtained for measurement acquired with different data resolution $h$, and for different choices of the regularization parameter $\alpha$. 

\begin{table}[ht!]
\centering
\setlength{\tabcolsep}{1em}
\begin{tabular}{c||c|c|c|c|c}
     $h\setminus\alpha$  & 0.16 & 0.08 & 0.04 & 0.02 & 0.01 \\
    \hline
    \hline
    1.00 & 0.0220 & 0.0117 & \textbf{0.0098} & 0.0128 & 0.0194\\
    0.75 & 0.0161 & \textbf{0.0111} & 0.0118 & 0.0155 & 0.0215\\
    0.60 & 0.0132 & \textbf{0.0086} & 0.0089 & 0.0129 & 0.0201\\
    0.43 & 0.0104 & \textbf{0.0079} & 0.0090 & 0.0121 & 0.0172\\
    0.30 & 0.0064 & \textbf{0.0053} & 0.0063 & 0.0087 & 0.0127\\
\end{tabular}
\caption{Relative reconstruction errors $\|R_{\alpha}^\delta-R^\dag\|_{H^2(0,2\pi)} / \|R^\dag\|_{H^2(0,2\pi)}$ for different data resolutions $h$ and regularization parameters $\alpha$; optimal results in bold.\label{tab:geo}}
\end{table}

As expected intuitively, reconstruction errors are slightly decreasing with increasing data resolution.  Further note that the reconstructions are stable with respect to the choice of the regularization parameter and the optimal regularization parameters, i.e., those for which the error is minimal, are practically independent of the data resolution. 

In Figure~\ref{fig:geo}, we display one of the data sets used for our computations, together with the reconstructed geometry
and the radius functions $R^\dag$ and $R_{\alpha}^\delta$ obtained in our numerical tests.
\begin{figure}[ht!]
\includegraphics[scale=0.5]{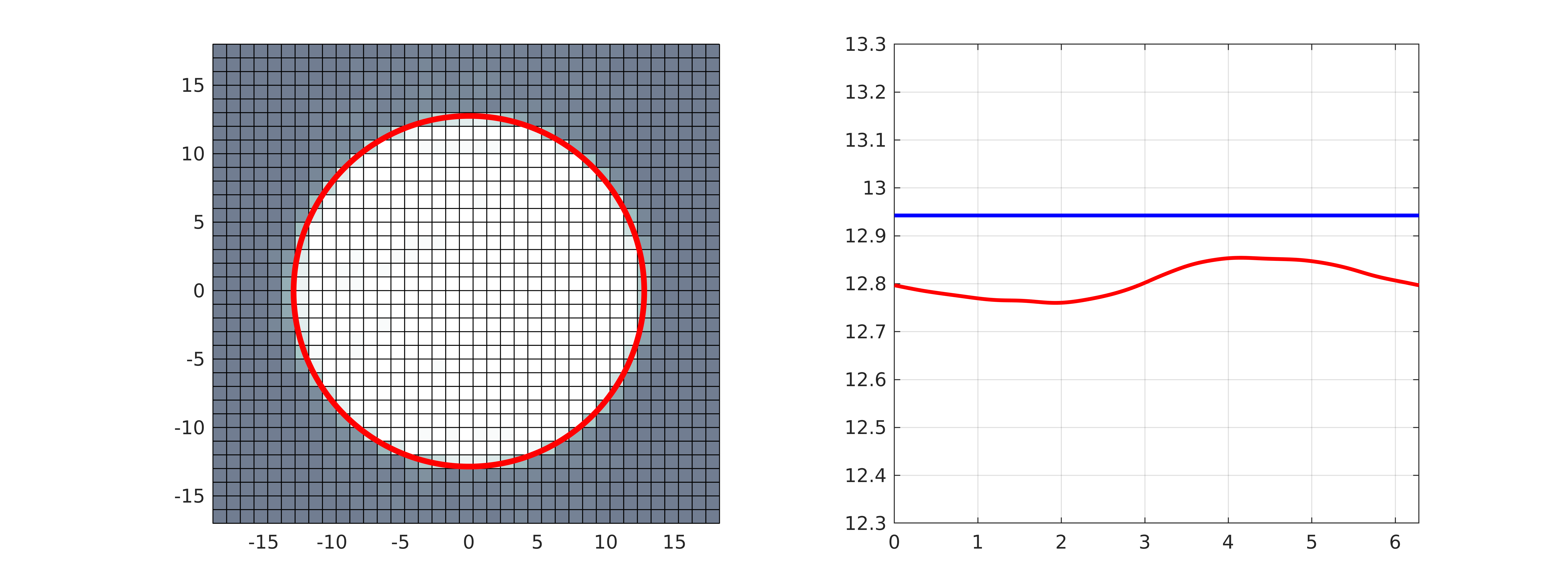}
\caption{Density measurements ($h=1$mm) and reconstructed geometry (left); corresponding radius functions $R^\dag$, $R_{\alpha}^\delta$ (right);
axis labeling in mm. \label{fig:geo}}
\end{figure}
The maximal error in the reconstructed radius is about $0.15$mm, which is substantially less than the voxel size $h=1$mm of the data.
This illustrates that sub-pixel resolution can be obtained by the proposed geometry reconstruction procedure.

\subsection{Velocity approximation}

We now turn to the reconstruction of the velocity field, for which we utilize 
phase-contrast magnetic resonance imaging data \cite{MarklEtAl12,PottersEtAl15}; also see Appendix~B. Let $\Omega_\alpha^\delta = \phi_\alpha^\delta(B)$ denote the approximation of the flow domain obtained with the radius function $R_\alpha^\delta$ reconstructed in the first step with $\alpha$ chosen as the best regularization parameter according to Table~\ref{tab:geo}. For voxels $V_i$ lying at least partially in the flow domain, i.e., with $|V_i \cap \Omega_\alpha^\delta|>0$, the measurements can interpreted as
\begin{equation*}
\fl\qquad\qquad
u^\epsilon|_{V_i}  =  \frac{1}{|V_i\cap\,\Omega_\alpha^\delta|}\,\int\limits_{V_i\cap\,\Omega_\alpha^\delta}\,u^\dag(x)\;dx\;+\;\mbox{noise}.
\end{equation*}
The remaining voxels only contain information about the noise and will not be used in the reconstruction; see Figure~\ref{fig:vel_raw} and the remarks given in Appendix~B.

\begin{figure}[ht!]
\centering
\includegraphics[width=0.8\textwidth]{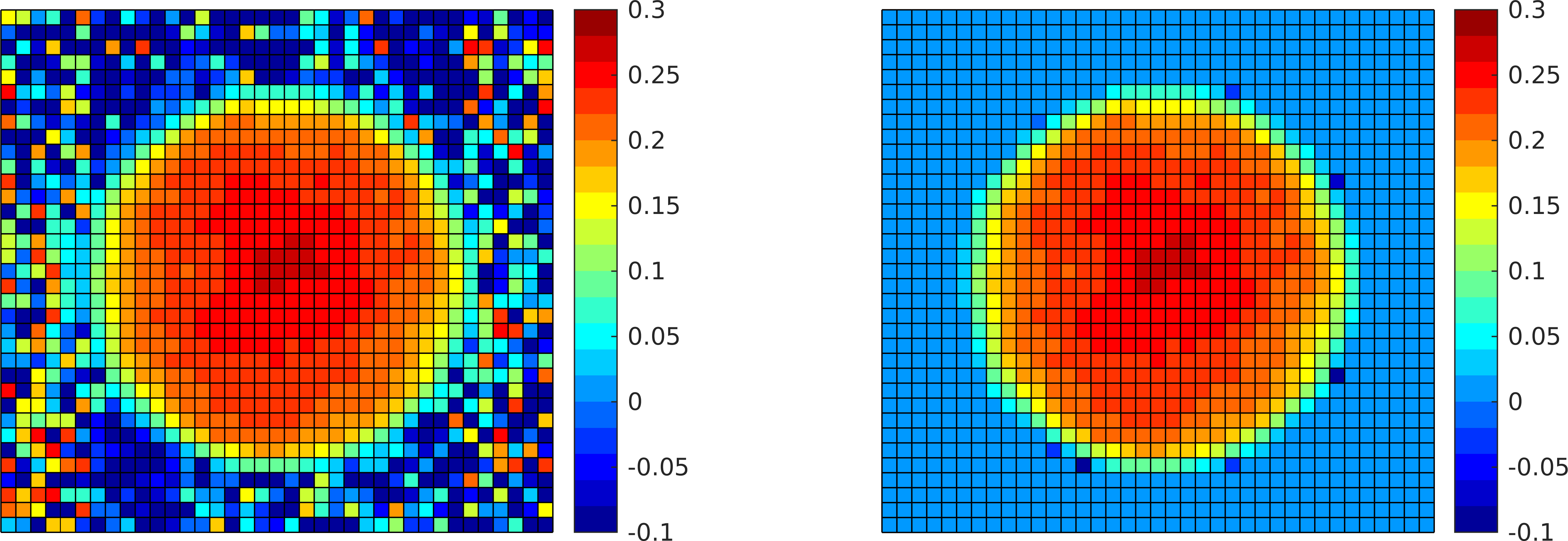}
\caption{Velocity raw data $u^\epsilon$ with large noise outside the flow domain (left) and truncated velocity raw data actually used in the reconstruction (right). \label{fig:vel_raw}}
\end{figure}

Following Remark~\ref{rem:physical}, we define the corresponding forward operator mapping to the physical domain by $\widetilde T_h: H^2(B)\cap H^1_0(B) \rightarrow L^2(\Omega_\alpha^\delta)$ with
\begin{equation*}
\fl\qquad\qquad
\widetilde{T}_h(v)|_{V_i} = \frac{1}{|V_i\cap\,\Omega_\alpha^\delta|}\,\int\limits_{V_i\cap\,\Omega_\alpha^\delta} v\circ(\phi_\alpha^\delta)^{-1}(x)\;dx.
\end{equation*}
Note that only voxels $V_i$ with $|V_i\cap\,\Omega_\alpha^\delta|>0$ are required in the definition of $\widetilde T_h$. 
The regularized approximation for the velocity field in the reference domain is then defined as the minimizer of the Tikhonov functional
\begin{equation*}
\fl\qquad\qquad
\widetilde v_{\alpha,\beta}^{\delta,\epsilon} = \arg\min_{v \in V_M} \|\widetilde T_h v  - u^\epsilon\|_{L^2(\Omega_\alpha^\delta)}^2  +  \beta\|\Delta v\|_{L^2(B)}^2,
\end{equation*}
over the space $V_M = \mbox{span}\{v\in H^1_0(B) \; | \; -\Delta v = \lambda v\mbox{ for some }\lambda\leq M\}$ of eigen functions of the Dirichlet Laplace operator on the unit disk. The solution $\widetilde v_{\alpha,\beta}^{\delta,\epsilon}$ can be computed efficiently by Cholesky factorization or the conjugate gradient method. 
In Table~\ref{tab:vel}, we display the reconstruction errors obtained 
with our algorithm for different data resolutions $h$ and regularization parameters $\beta$.

\begin{table}[ht!]
\centering
\setlength{\tabcolsep}{1em}
\begin{tabular}{c||c|c|c|c|c}
$h\setminus\beta$  & $3.2\cdot10^{-4}$ & $1.6\cdot10^{-4}$ & $8.0\cdot10^{-5}$ & $4.0\cdot10^{-5}$ & $2.0\cdot10^{-5}$ \\
    \hline
    \hline    
    1.00 & 0.3815 & 0.2661 & \textbf{0.1818} & 0.2249 & 0.3985 \\
    0.75 & 0.3317 & 0.2233 & \textbf{0.1926} & 0.2912 & 0.4541 \\
    0.60 & 0.3377 & 0.2313 & \textbf{0.1813} & 0.2507 & 0.3899 \\
    0.43 & 0.3098 & 0.2011 & \textbf{0.1795} & 0.2851 & 0.4478 \\
    0.30 & 0.3245 & 0.2090 & \textbf{0.1518} & 0.2106 & 0.3190
\end{tabular}

\caption{Relative reconstruction errors $\|\widetilde v_{\alpha,\beta}^{\delta,\epsilon} -v^{ref} \|_{H^2(B)}/\|v^{ref} \|_{H^2(B)}$, with $v^{ref}=u^{ref} \circ \phi^\dag$ denoting the simulated velocity on the reference domain, for different data resolutions $h$ and regularization parameters $\beta$; optimal results in bold.\label{tab:vel}} 
\end{table}

Recall that $u^{ref}$, which was obtained by numerical simulation, corresponds to a time averaged velocity field and therefore does not contain temporal fluctuations that are present in the measurements; see Figure~\ref{fig:vel}. 
This explains the relatively large errors in Table~\ref{tab:vel} and their independence of the data resolution. 
The optimal regularization parameters $\beta$ are again independent of the voxel size $h$ used in the data acquisition. 

\begin{figure}[ht!]
\centering
\includegraphics[width=0.8\textwidth]{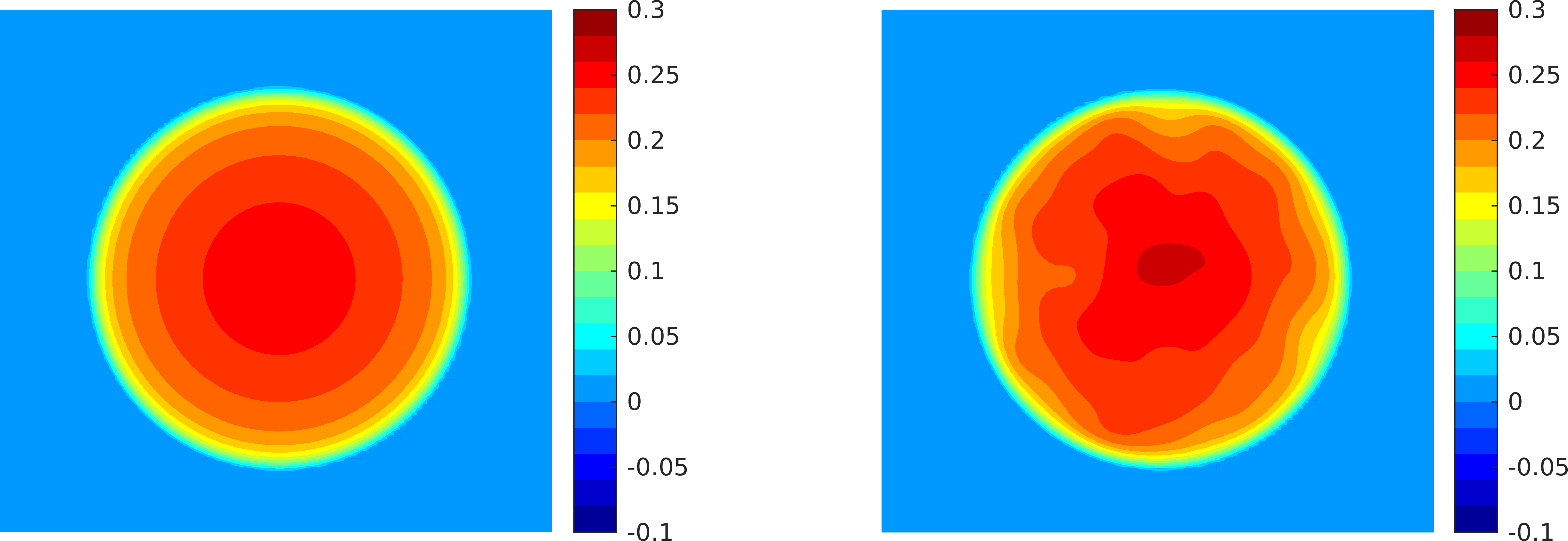}
\caption{Reference velocity $v^{ref} = u^{ref} \circ \phi^\dag$ (left) obtained by simulation and reconstructed velocity $\widetilde v_{\alpha,\beta}^{\delta,\epsilon}$ (right) for data resolution $h=1$mm and $\beta = 8.0\cdot10^{-5}$. \label{fig:vel}}
\end{figure}

A quick comparison with the raw data, depicted in Figure~\ref{fig:vel_raw}, with the plots in Figure~\ref{fig:vel} shows that the reconstructed velocity field correctly reproduces some turbulent fluctuations which, due to time averaging, are not present in the simulated reference velocity field. Apart from these differences, the reconstruction is in good agreement with the reference solution.

\subsection{Computation of the wall shear stress} 

As a final step in our numerical tests, we now utilize the reconstructed radius function $R_{\alpha}^\delta$ and velocity fields $v_{\alpha,\beta}^{\delta,\epsilon}$ to compute the approximation $\tau_{\alpha,\beta}^{\delta,\epsilon}$ for the wall shear stress by \eref{eq:wss}. 
In Table~\ref{tab:wss}, we display the results obtained for optimal $\alpha$ and $\beta = 1.6\cdot10^{-4}$. 
\begin{table}[ht!]
\centering
\begin{tabular}{c||c|c|c|c|c}
h & 1.00 & 0.75 & 0.60 & 0.43 & 0.30 \\ 
\hline
\hline
$\|\tau_{\alpha,\beta}^{\delta,\epsilon} - \tau^\dag\|_{L^2(0,2\pi)}$ & 0.0789 & 0.0385 & 0.0391 & 0.0264 & 0.0405 \\
\end{tabular}
\caption{\label{tab:wss}Relative errors $\|\tau_{\alpha,\beta}^{\delta,\epsilon} - \tau^\dag\|_{L^2(0,2\pi)}$ in the reconstruction of the wall shear stress for different data resolutions resp. voxel size $h$.}
\end{table}
Let us note that most part of the error stems from perturbations in higher modes, which can be suppressed efficiently by application of a low-pass filter. 
In Figure~\ref{fig:wss}, we plot the reconstruction of the wall shear stress $\tau_{\alpha,\beta}^{\delta,\epsilon}$ and its constant approximation $\overline \tau_{\alpha,\beta}^{\delta,\epsilon}$ against the reference value $\tau^{ref}$ obtained from numerical flow simulation \cite{Khoury13}. 

\begin{figure}[ht!]
\centering
\includegraphics[width=0.9\textwidth]{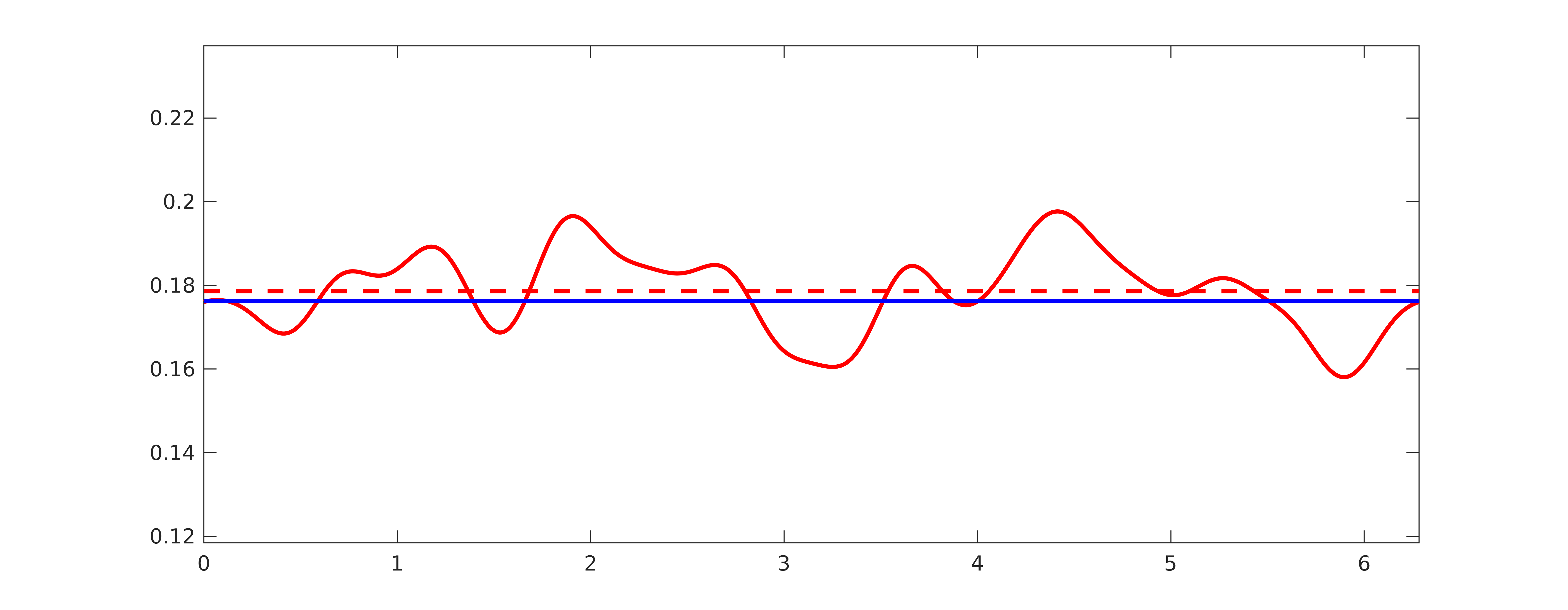}
\caption{Reconstructed wall shear stress $\tau_{\alpha,\beta}^{\delta,\epsilon}$ and constant approximation $\overline \tau_{\alpha,\beta}^{\delta,\epsilon}$ (red) in comparison to the reference wall shear stress $\tau^{ref}$ (blue) obtained by simulation. All results are functions of angle $\varphi$ with values in Pa. \label{fig:wss}}
\end{figure}

Note that the average wall shear stress $\overline \tau_{\alpha,\beta}^{\delta,\epsilon}$ is in very good agreement with the reference value $\tau^{ref}$ obtained for the simulated data. The local variations in the reconstruction $\tau_{\alpha,\beta}^{\delta,\epsilon}$ can be explained by the turbulent variations of velocity in the data; compare with the plots in Figure~\ref{fig:vel_raw} and \ref{fig:vel}.

\section{Discussion}

As reviewed by Petersson \cite{Petersson12}, the stable and accurate estimation of  wall shear stress from magnetic resonance imaging data, is a delicate issue and 
most reconstruction approaches reported in literature seem not to work properly.
In this paper, we therefore considered a systematic approach for the estimation of wall shear stress from magnetic resonance images of density and velocity, for which stability and convergence could be established under simple and realistic smoothness assumptions on the flow geometry and velocity. 
The theoretical results were validated by numerical tests for experimental data which demonstrate that wall shear stress can be estimated from magnetic resonance imaging data with relative errors of a few percent and practically independent of the data resolution; this is in stark contrast to the results of Stalder~\cite{Stalder08}.  

Let us note that stable wall shear stress estimates can also be obtained via empirical Moody charts \cite{Moody44} or the Clauser plot method \cite{Clauser56,ShokinaEtAl18}, which are, however, limited to axisymmetric geometry or fully developed turbulent flow. Numerical simulations could also be used, in principle, to compute wall shear stress estimates \cite{Khoury13}, but precise knowledge about the rheological properties of the fluid are required. 
In contrast to these approaches, the method considered in this paper is generally applicable and, therefore, seems most appropriate for application in a clinical context.

\section*{Acknowledgements}

Experimental data were acquired at the the University Medical Center, Freiburg, 
together with A. Bauer (SLA, TU Darmstadt) and A. Krafft, N. Shokina (Med.Phys., UMC Freiburg). 
Funding of the authors by the German Research Foundation (DFG) via grant Eg-331/1-1 and through grant GSC~233 of the ``Excellence Initiative'' of the German Federal and State Governments is gratefully acknowledged.

\appendix
\renewcommand{\thetheorem}{A.\arabic{theorem}.}
\setcounter{theorem}{0}

\section{Geometric results}

In the following, we present some auxiliary results concerning geometrical details.
Recall that $D(F)=\{R\in H^2_{per}(0,2\pi)\mid r_0<R<r_1\}$ for some $0<r_0<r_1<1$ and that $\Omega_R = \phi_R(B)$ is the image of the unit ball $B$ under the transformation
\begin{equation*}
\phi_R(r \cos(\varphi),r\sin(\varphi)) = (r_0 + (R(\varphi) - r_0)\,r^\eta)\;(\cos(\varphi),\sin(\varphi))
\end{equation*}
with $\eta\geq 2$. The Jacobian matrix of $\phi_R$ is denoted by $J_R$, and for given functions $R_1,R_2$, the subscripts transfer to the associated objects, e.g. $\phi_1=\phi_{R_1}$ and $J_2 = J_{R_2}$.

As a first result, we estimate the differences of domains $\Omega_R = \phi_R(B)$ in terms of differences of their parameterizing radius functions.
\begin{lemma}\label{lem:a1}
Let $R_1,R_2\in D(F)$. Then
\begin{equation} \label{eq:domain_diff}
|\Omega_1\setminus\Omega_2| \leq C \|R_1-R_2\|_{H^1(0,2\pi)}.
\end{equation}
\end{lemma}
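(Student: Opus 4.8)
The plan is to express the symmetric-difference-type region $\Omega_1 \setminus \Omega_2$ directly in polar coordinates and reduce the area computation to a one-dimensional integral over the angular variable. First I would observe that, by the very definition \eref{eq:OmegaR}, for each fixed angle $\varphi$ the radial slice of $\Omega_1$ is the segment $0 \le r < R_1(\varphi)$ and the slice of $\Omega_2$ is $0 \le r < R_2(\varphi)$. Hence a point $(r\cos\varphi, r\sin\varphi)$ lies in $\Omega_1 \setminus \Omega_2$ precisely when $R_2(\varphi) \le r < R_1(\varphi)$, which is nonempty only on the set of angles where $R_1(\varphi) > R_2(\varphi)$.

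Next I would compute the area using the polar area element $r\,dr\,d\varphi$:
\begin{equation*}
|\Omega_1 \setminus \Omega_2| = \int_0^{2\pi} \int_{\min\{R_1(\varphi),R_2(\varphi)\}}^{R_1(\varphi)} r \, dr \, d\varphi
= \tfrac12 \int_0^{2\pi} \bigl( R_1(\varphi)^2 - \min\{R_1,R_2\}^2 \bigr) \, d\varphi,
\end{equation*}
where the inner integral contributes only where $R_1 > R_2$. On that set the integrand equals $\tfrac12(R_1^2 - R_2^2) = \tfrac12 (R_1-R_2)(R_1+R_2)$. I would then bound the factor $R_1+R_2$ uniformly from above by $2r_1$ using the constraint $R_i \le r_1$ from $D(F)$, so that the integrand is dominated pointwise by $r_1 \, |R_1(\varphi) - R_2(\varphi)|$.

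Combining these, I obtain $|\Omega_1 \setminus \Omega_2| \le r_1 \int_0^{2\pi} |R_1(\varphi) - R_2(\varphi)| \, d\varphi = r_1 \, \|R_1 - R_2\|_{L^1(0,2\pi)}$. The final step is to pass from the $L^1$ norm to the claimed $H^1$ norm on the right-hand side: by the Cauchy--Schwarz inequality on the finite interval $(0,2\pi)$ one has $\|R_1 - R_2\|_{L^1(0,2\pi)} \le \sqrt{2\pi}\,\|R_1 - R_2\|_{L^2(0,2\pi)} \le \sqrt{2\pi}\,\|R_1 - R_2\|_{H^1(0,2\pi)}$, which yields \eref{eq:domain_diff} with $C = r_1 \sqrt{2\pi}$.

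I do not expect a genuine obstacle here, since the geometry is star-shaped and the slices are intervals, making the Fubini-type reduction clean; the only mild care needed is the bookkeeping of where the minimum switches between $R_1$ and $R_2$, which is handled automatically by writing the integrand as $R_1^2 - \min\{R_1,R_2\}^2$ and noting it vanishes wherever $R_1 \le R_2$. It is worth remarking that the estimate in fact controls $|\Omega_1 \setminus \Omega_2|$ already by the weaker $L^1$ (or $L^2$) norm of $R_1 - R_2$, so the $H^1$ bound in the statement is not sharp; the $H^1$ form is presumably stated merely because it is the norm that is convenient when this lemma is invoked in the proof of Lemma~\ref{lem:aux1}.
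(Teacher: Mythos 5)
Your proof is correct and follows essentially the same route as the paper: both compute $|\Omega_1\setminus\Omega_2|$ by the identical polar-coordinate integral and bound the integrand $\tfrac12(R_1+R_2)|R_1-R_2|$ using $R_i\le r_1$. The only (immaterial) difference is the last step — you pass from $L^1$ to $H^1$ via Cauchy--Schwarz, while the paper bounds by $\|R_1-R_2\|_{L^\infty}$ and invokes the embedding $H^1(0,2\pi)\hookrightarrow L^\infty(0,2\pi)$ — and your observation that the $L^1$ (or $L^2$) norm already suffices is accurate.
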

\begin{proof}
Let us define $R_{max}(\varphi) = \max\{R_1(\varphi),R_2(\varphi)\}$. Then 
\begin{eqnarray*}
\fl\quad
|\Omega_1\setminus\Omega_2|
=\int\nolimits_0^{2\pi}\int\nolimits_{R_2(\varphi)}^{R_{max}(\varphi)}r\,dr\,d\varphi
=\frac{1}{2}\int\nolimits_0^{2\pi}(R_{max}(\varphi)+R_2(\varphi)) (R_{max}(\varphi)-R_2(\varphi))\,d\varphi\\
\fl\qquad\qquad
\leq 2 \pi r_1 \|R_{max} - R_2\|_{L^\infty(0,2\pi)} 
\leq 2 \pi r_1 C \|R_1-R_2\|_{H^1(0,2\pi)}.
\end{eqnarray*}
In the last step, we used the continuous embedding $H^1(0,2\pi) \hookrightarrow L^\infty(0,2\pi)$. 
\end{proof}

As a next step, we estimate differences in the normal vector $n_R$ defined in \eref{eq:normal}, in terms of differences in the radius function.
\begin{lemma}\label{lem:a2}
Let $R_1,R_2\in D(F)$. Then
\begin{equation}
\label{eq:normal_diff}
\|n_1 - n_2\|_{L^\infty(\partial B)} \leq C\,\|R_1-R_2\|_{H^2(0,2\pi)}.
\end{equation}
\end{lemma}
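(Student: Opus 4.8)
The plan is to read off the explicit formula \eref{eq:normal} and recognize the normal vector as a $\varphi$-dependent rotation applied to the normalization of the vector $(R(\varphi),R'(\varphi))$. Writing $p_i(\varphi) = (R_i(\varphi),R_i'(\varphi))$, the numerator in \eref{eq:normal} is precisely $M(\varphi)\,p_i(\varphi)$ with
\begin{equation*}
M(\varphi) = \left(\begin{array}{cc} \cos\varphi & \sin\varphi \\ \sin\varphi & -\cos\varphi \end{array}\right),
\end{equation*}
an orthogonal matrix, while the denominator equals $|M(\varphi)\,p_i(\varphi)| = |p_i(\varphi)|$. Hence $n_i(\varphi) = M(\varphi)\,p_i(\varphi)/|p_i(\varphi)|$, and since $M(\varphi)$ preserves Euclidean length, I obtain the pointwise identity
\begin{equation*}
|n_1(\varphi) - n_2(\varphi)| = \left|\frac{p_1(\varphi)}{|p_1(\varphi)|} - \frac{p_2(\varphi)}{|p_2(\varphi)|}\right| .
\end{equation*}
This reduces the claim to a Lipschitz estimate for the normalization map $p \mapsto p/|p|$, and, crucially, it removes any explicit $\varphi$-dependence from the constant.

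The only point where the normalization fails to be Lipschitz is the origin, so the essential step is to verify that $p_i$ stays uniformly away from it. Because $R_i \in D(F)$ satisfies $R_i \ge r_0 > 0$, I have $|p_i(\varphi)| = \sqrt{R_i(\varphi)^2 + R_i'(\varphi)^2} \ge r_0$ for every $\varphi$, \emph{independently} of the size of $R_i'$. I then invoke the elementary bound
\begin{equation*}
\left|\frac{p}{|p|} - \frac{q}{|q|}\right| \le \frac{2}{\rho}\,|p-q| \qquad \text{whenever } |p|,|q| \ge \rho > 0,
\end{equation*}
which follows by splitting the left-hand difference as $(p-q)/|p| + q\,(1/|p| - 1/|q|)$ and using $\big||q|-|p|\big| \le |p-q|$. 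Taking $\rho = r_0$ yields, pointwise in $\varphi$,
\begin{equation*}
|n_1(\varphi) - n_2(\varphi)| \le \frac{2}{r_0}\,|p_1(\varphi) - p_2(\varphi)| \le \frac{2}{r_0}\big(|R_1(\varphi)-R_2(\varphi)| + |R_1'(\varphi)-R_2'(\varphi)|\big).
\end{equation*}

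To finish, I take the supremum over $\varphi$ and apply the continuous embedding $H^2_{per}(0,2\pi) \hookrightarrow C^1[0,2\pi]$, which controls $\|R_1-R_2\|_{L^\infty} + \|R_1'-R_2'\|_{L^\infty}$ by $C\,\|R_1-R_2\|_{H^2(0,2\pi)}$; this gives \eref{eq:normal_diff}. The main obstacle I anticipate is precisely guaranteeing that the Lipschitz constant of the normalization does not degenerate: a naive estimate would seem to require an a priori bound on $R_i'$, but the structural lower bound $R \ge r_0$ built into the definition of $D(F)$ keeps $(R,R')$ bounded away from the singularity of $p \mapsto p/|p|$ no matter how large the derivative is, so that the resulting constant $C$ depends only on $r_0$ and the embedding constant.
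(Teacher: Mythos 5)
Your proof is correct, and it establishes exactly the stated bound with a constant depending only on $r_0$. It rests on the same two ingredients as the paper's argument -- the splitting $\frac{p}{|p|}-\frac{q}{|q|}=\frac{p-q}{|p|}+q\bigl(\frac{1}{|p|}-\frac{1}{|q|}\bigr)$ together with the reverse triangle inequality, and the uniform lower bound $\sqrt{R^2+R'^2}\ge R\ge r_0$ coming from the definition of $D(F)$ -- but the packaging is genuinely cleaner. The paper expands $n_1-n_2=\lambda_r e_r+\lambda_\varphi e_\varphi$ in the moving frame $(e_r,e_\varphi)$ and carries out the algebra for the radial and angular coefficients separately (obtaining the explicit constant $(1+\sqrt 2)/r_0$ for each); you instead observe that the numerator of \eref{eq:normal} is $M(\varphi)\,(R,R')^T$ with $M(\varphi)$ orthogonal, so that the whole question collapses at once to the Lipschitz continuity of the normalization map $p\mapsto p/|p|$ away from the origin. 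This buys a one-line reduction that treats both components simultaneously and makes transparent why no a priori bound on $R'$ is needed, which is the point the paper's componentwise computation somewhat obscures. The final passage from the pointwise $W^{1,\infty}$ bound to the $H^2$ norm via the one-dimensional Sobolev embedding is the same in both arguments.
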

\begin{proof}
Let us introduce $e_r = (\cos(\varphi),\sin(\varphi))$ and $e_\varphi = (-\sin(\varphi),\cos(\varphi))$. 
Omitting the explicit notion of the dependence on $\varphi$, we then obtain for any angle $\varphi$ that 
\begin{equation*}
|n_1- n_2|^2 = |\lambda_r\,e_r + \lambda_\varphi\,e_\varphi|^2 = \lambda_r^2 + \lambda_\varphi^2.
\end{equation*}
The radial component of the difference can be estimated by 
\begin{eqnarray*}
\fl\quad
\left|\lambda_r\right| 
=  \left|\tfrac{R_1}{\sqrt{R_1^2 + R_1^{\prime 2}}} - \tfrac{R_2}{\sqrt{R_2^2 + R_2^{\prime 2}}}\right|
\leq \left|\tfrac{R_1-R_2}{\sqrt{R_1^2 + R_1^{\prime 2}}} + R_2\tfrac{\sqrt{R_2^2 + R_2^{\prime 2}}-\sqrt{R_1^2 + R_1^{\prime 2}}}{\sqrt{R_1^2 + R_1^{\prime 2}}\sqrt{R_2^2 + R_2^{\prime 2}}}\right|\\
\fl\quad\quad\quad
\leq \tfrac{|R_1-R_2|}{r_0} + R_2\tfrac{\sqrt{(R_1-R_2)^2 + (R_1^\prime-R_2^\prime)^2}}{\sqrt{R_1^2 + R_1^{\prime 2}}\sqrt{R_2^2 + R_2^{\prime 2}}}
\leq \tfrac{1 + \sqrt2}{r_0}\|R_1-R_2\|_{W^{1,\infty}(0,2\pi)}
\end{eqnarray*}
In a similar way, one can estimate $\lambda_\varphi$, and by Sobolev's embedding theorem, we obtain \begin{eqnarray*}
\|n_1-n_2\|_{L^\infty(\partial B)}
\leq \|\lambda_r\|_{L^\infty(\partial B)} + \|\lambda_\varphi\|_{L^\infty(\partial B)}
\leq C \|R_1-R_2\|_{H^2(0,2\pi)},
\end{eqnarray*}
which already yields the desired estimate.
\end{proof}

The following result ensures smoothness of the transformations $\phi_R$ 
whenever the radius function $R$ is sufficiently smooth. 
\begin{lemma}\label{lem:a3}
Let $R_1,R_2\in D(F) \cap H^k_{per}(0,2\pi)$ for $k\le \eta$ with $\eta \ge 2$ as in \eref{eq:trafo}.
Then
\begin{equation}
\label{eq:phi_continuous}
\|\phi_1\|_{W^{k-1,\infty}(B)} \leq C\;\|R_1\|_{H^k(0,2\pi)}.
\end{equation}
where $\phi_1 = \phi_{R_1}$ with $\phi_R$ defined in \eref{eq:trafo}. Moreover, 
\begin{equation}
\label{eq:diff_phi_continuous}
\|\phi_1 - \phi_2\|_{W^{k-1,\infty}(B)} \leq C\;\|R_1-R_2\|_{H^k(0,2\pi)}
\end{equation}
\end{lemma}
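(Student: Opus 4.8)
The plan is to exploit the affine dependence of $\phi_R$ on $R$ that is visible in \eref{eq:trafo}: writing $e_r=(\cos\varphi,\sin\varphi)$ and splitting off the $R$-independent part, one has $\phi_R(x)=r_0\,x+(R(\varphi)-r_0)\,r^\eta e_r$, where the first summand is the linear (hence $C^\infty$) map $x\mapsto r_0 x$. For the difference \eref{eq:diff_phi_continuous} this linear term together with the constant $r_0$ cancels, so that $\phi_1-\phi_2=(R_1-R_2)(\varphi)\,r^\eta e_r$ exactly. Consequently both assertions reduce to a single estimate, namely $\|\psi_G\|_{W^{k-1,\infty}(B)}\le C\|G\|_{H^k(0,2\pi)}$ for the map $\psi_G(x):=G(\varphi)\,r^\eta e_r$ and arbitrary $G\in H^k_{per}(0,2\pi)$; one then applies this with $G=R_1-r_0$ for \eref{eq:phi_continuous} (absorbing the bounded linear term $r_0 x$ and the constant $r_0$ into $C$) and with $G=R_1-R_2$ for \eref{eq:diff_phi_continuous}.

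For $r$ bounded away from $0$ the map $\psi_G$ is a smooth function of the polar variables and the polar-to-Cartesian change of variables is non-degenerate, so all Cartesian derivatives are controlled there without difficulty. The only genuine issue is uniform boundedness of the derivatives as $r\to0$, i.e. at the coordinate singularity of polar coordinates, and this is precisely where the exponent condition $\eta\ge k$ enters. I would handle this by expressing the Cartesian derivatives through the operators $\partial_r$ and $\tfrac1r\partial_\varphi$ via $\partial_{x_1}=\cos\varphi\,\partial_r-\tfrac{\sin\varphi}{r}\partial_\varphi$ and $\partial_{x_2}=\sin\varphi\,\partial_r+\tfrac{\cos\varphi}{r}\partial_\varphi$, and then tracking the homogeneity in $r$.

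The key step is the following claim, which I would prove by induction on $|\alpha|$: for every multi-index $\alpha$ with $|\alpha|=j\le k-1$, the derivative $\partial^\alpha\psi_G$ is a finite linear combination of terms of the form $r^{\,\eta-j}\,P(\cos\varphi,\sin\varphi)\,G^{(l)}(\varphi)$ with $0\le l\le j$ and $P$ a trigonometric polynomial. The induction is immediate once one observes that both $\partial_r$ and $\tfrac1r\partial_\varphi$ lower the power of $r$ by exactly one, the former leaving the angular factor untouched and the latter raising the order of the angular derivative by one; the prefactors $\cos\varphi,\sin\varphi$ and the $\varphi$-derivatives of $e_r$ only generate further trigonometric polynomials $P$ and therefore preserve the asserted structure. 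With the claim in hand the estimate is routine: since $\eta\ge k$ we have $\eta-j\ge\eta-(k-1)\ge1>0$, so each factor $r^{\,\eta-j}$ is bounded (indeed vanishing) on $B$, while the angular factors are bounded in $L^\infty(0,2\pi)$ by $\|G\|_{C^{k-1}}\le C\|G\|_{H^k(0,2\pi)}$ through the one-dimensional Sobolev embedding $H^k_{per}(0,2\pi)\hookrightarrow C^{k-1}$. Summing over the finitely many terms yields $\|\psi_G\|_{W^{k-1,\infty}(B)}\le C\|G\|_{H^k}$, and both \eref{eq:phi_continuous} and \eref{eq:diff_phi_continuous} follow.

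I expect the main obstacle to be exactly the bookkeeping near $r=0$: making sure that \emph{every} Cartesian derivative really does reduce the $r$-homogeneity by one, so that the worst term after $k-1$ derivatives is $r^{\,\eta-k+1}$ and still vanishes at the origin, and that no angular derivative of $G$ beyond order $k-1$ is ever produced. The vanishing of these top-order terms as $r\to0$ also gives continuity of the derivatives up to order $k-1$, so that in fact $\psi_G\in C^{k-1}(\overline B)$. Once the inductive structural claim is phrased correctly, the remainder is only boundedness of powers of $r$ and the Sobolev embedding, both elementary.
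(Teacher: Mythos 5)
Your proposal is correct and follows essentially the same route as the paper: an explicit computation of the derivatives of $\phi_R$ in polar coordinates, the observation that $\eta\ge k$ keeps all powers of $r$ nonnegative (in fact strictly positive) at the origin, the one-dimensional Sobolev embedding $H^k_{per}(0,2\pi)\hookrightarrow C^{k-1}$, and the affine linearity of $R\mapsto\phi_R$ to deduce the difference estimate from the first one. The paper only writes out the Jacobian and states that higher-order derivatives "follow in a similar way"; your inductive structural claim for $\partial^\alpha\bigl(G(\varphi)r^\eta e_r\bigr)$ supplies exactly the bookkeeping that this remark glosses over.
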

\begin{proof}
The continuity of $\phi_1$ is obvious and the Jacobian $J_1=J_{R_1}$ obtained by derivation with respect to coordinates $x=(r \cos \phi,r \sin \phi)$ reads
\begin{equation}
\label{eq:Jacobian_rep}
\fl
J_1(r \cos \varphi, r \sin \varphi) 
= \left(
\begin{array}{cc}
r_0\,+\,\eta\,(R_1(\varphi)-r_0)\,r^{\eta-1} & R_1^\prime(\varphi)\,r^{\eta-1}\\
0 & r_0\,+\,(R_1(\varphi)-r_0)\,r^{\eta-1}
\end{array}
\right).
\end{equation}
Since $\eta \ge 2$, the Jacobian can be seen to be continuous also in $r=0$, and we have
\begin{equation*}
\| J_1\|_{L^\infty(B)} 
\leq C\,(\|R_1\|_{L^\infty(0,2\pi)} + \|R_1^\prime\|_{L^\infty(0,2\pi)})
\leq C\|R_1\|_{H^2(0,2\pi)}.
\end{equation*}
This shows the first estimate for $k=1$. 
The assertion for higher order derivatives of $\phi_1$ follow in a similar way. 
From the formula \eref{eq:trafo}, one can see that $\phi_R$ is affine linear in $R$. Hence the second estimate follows directly from the first. 
\end{proof}

As a next step, we show that the transformations $\phi_R$ defined in \eref{eq:trafo} are invertible.
\begin{lemma}\label{lem:a4}
Let $R\in D(F)$. 
Then the transformation $\phi_R$ defined in \eref{eq:trafo} 
is a diffeomorphism with inverse transformation $\psi_R = (\phi_R)^{-1}$ and
\begin{equation}
\label{eq:BoundJacInverse}
\|J_R^{-1}\|_{L^\infty(B)} = \| J_{\psi_R}\|_{L^\infty(\Omega_R)}\leq C\,\|R\|_{H^2(0,2\pi)}
\end{equation}
\end{lemma}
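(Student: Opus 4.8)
The plan is to prove Lemma A.4 by exploiting the lower triangular structure of the Jacobian matrix $J_R$ exhibited in \eref{eq:Jacobian_rep}, which makes both the invertibility of $\phi_R$ and the explicit bound on $\|J_R^{-1}\|_{L^\infty(B)}$ accessible through elementary computations. First I would establish that $\phi_R$ is globally injective. Since the transformation acts along rays $\varphi = \mathrm{const}$ by the scalar map $r \mapsto r_0\,r + (R(\varphi)-r_0)\,r^\eta$, and since $R(\varphi) > r_0$ on $D(F)$, this radial map has strictly positive derivative in $r$ for every fixed $\varphi$; hence it is strictly increasing from $0$ to $R(\varphi)$, so distinct points on the same ray map to distinct points, and points on distinct rays map to distinct rays. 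This gives injectivity, and surjectivity onto $\Omega_R$ follows since every angle is attained and the radial map is onto $[0,R(\varphi))$.

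Next I would verify that $\phi_R$ is a local diffeomorphism by computing the determinant of \eref{eq:Jacobian_rep}. Because the matrix is lower triangular in the $(e_r,e_\varphi)$ frame, the determinant is the product of the two diagonal entries,
\begin{equation*}
\det J_R = \bigl(r_0 + \eta(R(\varphi)-r_0)\,r^{\eta-1}\bigr)\bigl(r_0 + (R(\varphi)-r_0)\,r^{\eta-1}\bigr) \ge r_0^2 > 0,
\end{equation*}
uniformly on $B$, using $R(\varphi) \ge r_0$ and $r \ge 0$. Combined with injectivity and the inverse function theorem, this shows $\phi_R$ is a diffeomorphism onto $\Omega_R$, so the inverse $\psi_R$ exists and is smooth.

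The remaining task is the quantitative estimate \eref{eq:BoundJacInverse}. I would invert the triangular matrix explicitly: the inverse of a lower triangular matrix is again lower triangular with reciprocal diagonal entries, and the single off-diagonal entry of $J_R^{-1}$ equals $-R'(\varphi)\,r^{\eta-1}$ divided by the product of the two diagonal entries. The diagonal entries of $J_R^{-1}$ are bounded by $1/r_0$ since each diagonal entry of $J_R$ is at least $r_0$, and the off-diagonal entry is bounded by $|R'(\varphi)|\,r^{\eta-1}/r_0^2 \le |R'(\varphi)|/r_0^2$ using $r \le 1$ and $\eta \ge 2$. Taking the supremum over $B$ and invoking the continuous embedding $H^2(0,2\pi) \hookrightarrow W^{1,\infty}(0,2\pi)$ to control $\|R'\|_{L^\infty}$ by $\|R\|_{H^2(0,2\pi)}$ yields the stated bound with an explicit constant depending only on $r_0$. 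The identity $\|J_R^{-1}\|_{L^\infty(B)} = \|J_{\psi_R}\|_{L^\infty(\Omega_R)}$ is simply the chain rule relation $J_{\psi_R}(\phi_R(x)) = J_R(x)^{-1}$ together with the fact that $\phi_R$ maps $B$ onto $\Omega_R$.

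The main obstacle I anticipate is not any single estimate but the bookkeeping at $r=0$: the entries of $J_R$ involve $r^{\eta-1}$, and one must confirm that the determinant bound and the inverse remain valid and continuous up to the origin. This is precisely where the hypothesis $\eta \ge 2$ is essential, since it guarantees $r^{\eta-1} \to 0$ smoothly and the Jacobian degenerates to $r_0 \cdot \mathrm{Id}$ at the center rather than becoming singular; once this is checked, the uniform lower bound $\det J_R \ge r_0^2$ holds on all of $B$ and the rest is routine.
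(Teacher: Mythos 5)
Your proposal is correct and follows essentially the same route as the paper: a uniform lower bound $\det J_R \ge r_0^2$ read off from the triangular structure of \eref{eq:Jacobian_rep}, the inverse function theorem for invertibility, and an elementary entrywise bound on $J_R^{-1}$ combined with the embedding $H^2(0,2\pi)\hookrightarrow W^{1,\infty}(0,2\pi)$. You in fact supply two details the paper leaves implicit --- the global injectivity argument along rays and the explicit inversion of the (upper, not lower) triangular matrix --- which fleshes out rather than changes the argument.
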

\begin{proof}
Using \eref{eq:Jacobian_rep}, we can estimate the determinant by
\begin{equation*}
\fl\qquad
\det(J_R(r \cos\varphi,r\sin\varphi)) = (r_0+\eta(R(\varphi)-r_0)r^{\eta-1})\cdot(r_0 + (R(\varphi)-r_0)r^{\eta-1})\ge r_0^2.
\end{equation*}
The existence of an inverse transformation $\psi_R = (\phi_R)^{-1}$ then follows form the 
implicit function theorem and the Jacobian of the inverse mapping is 
$J_{\psi_R}\circ\phi_R = J_R^{-1}$. The bounds for $J_{\psi_R}$ can then be deduced in an elementary way.
\end{proof}

Using the previous results, we can also bound differences in the inverse mappings.
\begin{lemma}\label{lem:a5}
Let $R_1,R_2\in D(F)$ and assume that $R_1\in H^3_{per}(0,2\pi)$. 
Furthermore, let $\psi_1$, $\psi_2$ denote the corresponding inverse transformations with Jacobians $J_{\psi_1}$, $J_{\psi_2}$. Then 
\begin{equation}
\label{eq:diff_inverse}
\|\psi_1-\psi_2\|_{L^\infty(\Omega_1\cap\,\Omega_2)} \leq C\, \|R_1-R_2\|_{H^1(0,2\pi)}
\end{equation}
and the difference in the Jacobians can be bounded by
\begin{equation}
\label{eq:diff_inverse_derivative}
\|J_{\psi_1}-J_{\psi_2}\|_{L^\infty(\Omega_1\cap\,\Omega_2)} \leq C\, \|R_1-R_2\|_{H^2(0,2\pi)}
\end{equation}
\end{lemma}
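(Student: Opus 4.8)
The plan is to exploit that both $\phi_{R_1}$ and $\phi_{R_2}$ act in polar coordinates by preserving the angle: writing $\phi_R(r\cos\varphi,r\sin\varphi) = g_R(r,\varphi)\,(\cos\varphi,\sin\varphi)$ with radial profile $g_R(r,\varphi)=r_0 r + (R(\varphi)-r_0)r^\eta$, the inverse $\psi_R$ sends a point of polar coordinates $(\rho,\varphi)$ to $h_R(\rho,\varphi)(\cos\varphi,\sin\varphi)$, where $h_R(\cdot,\varphi)$ is the inverse of the strictly increasing map $g_R(\cdot,\varphi):[0,1]\to[0,R(\varphi)]$. Hence for a common point $P\in\Omega_1\cap\Omega_2$ of polar coordinates $(\rho,\varphi)$ one has $|\psi_1(P)-\psi_2(P)| = |h_{R_1}(\rho,\varphi)-h_{R_2}(\rho,\varphi)|$, so both estimates reduce to comparing the scalar radial inverses.

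For \eref{eq:diff_inverse} I would fix $(\rho,\varphi)$ and read $g_R(r,\varphi)=\rho$ as an implicit equation for $r=r(R)$. Since $\partial_r g_R = r_0 + \eta(R-r_0)r^{\eta-1}\ge r_0>0$ and $\partial_R g_R = r^\eta\le 1$ on $\overline B$, the implicit function theorem gives $|\mathrm dr/\mathrm dR| = r^\eta/(r_0+\eta(R-r_0)r^{\eta-1})\le 1/r_0$. Integrating this bound along the family of radii joining $R_2(\varphi)$ to $R_1(\varphi)$ — which stays admissible because $\rho<\min\{R_1(\varphi),R_2(\varphi)\}$ keeps $\rho$ in the range of every intermediate $g_R(\cdot,\varphi)$ — yields $|r_1-r_2|\le r_0^{-1}|R_1(\varphi)-R_2(\varphi)|$. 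Taking the supremum over $P$ and invoking the embedding $H^1(0,2\pi)\hookrightarrow L^\infty(0,2\pi)$ then gives \eref{eq:diff_inverse}.

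For \eref{eq:diff_inverse_derivative} I would start from $J_{\psi_i}(P) = [J_{R_i}(\psi_i(P))]^{-1}$ together with the identity $A^{-1}-B^{-1}=A^{-1}(B-A)B^{-1}$. The two outer inverse factors are bounded in $L^\infty$ by $C\|R_i\|_{H^2}$ through Lemma~\ref{lem:a4}, so it remains to control the middle term $J_{R_2}(\psi_2(P))-J_{R_1}(\psi_1(P))$, which I would split as
\[
\bigl(J_{R_2}-J_{R_1}\bigr)(\psi_2(P)) + \bigl(J_{R_1}(\psi_2(P))-J_{R_1}(\psi_1(P))\bigr).
\]
The first bracket is a same-point Jacobian difference, bounded by $C\|R_1-R_2\|_{H^2}$ via Lemma~\ref{lem:a3}; the second is controlled by the Lipschitz continuity of $J_{R_1}$ on $\overline B$ — which holds because $R_1\in H^3$ makes $\phi_1\in W^{2,\infty}(B)$ by Lemma~\ref{lem:a3} — combined with the already-proved bound on $|\psi_2(P)-\psi_1(P)|$, giving $C\|R_1\|_{H^3}\|R_1-R_2\|_{H^1}\le C'\|R_1-R_2\|_{H^2}$. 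Collecting the factors yields \eref{eq:diff_inverse_derivative}.

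The main obstacle is the asymmetry of the regularity hypotheses: only $R_1\in H^3$ is assumed, so the Lipschitz-continuity step must be applied to $J_{R_1}$ and never to $J_{R_2}$. This dictates the precise splitting above — evaluating the same-point Jacobian difference at $\psi_2(P)$ so that the remaining term is $J_{R_1}$ sampled at the two points $\psi_2(P)$ and $\psi_1(P)$, which lie on a common radial ray in the convex set $\overline B$ so that the mean-value inequality applies. Everything else is routine bookkeeping with the constants supplied by Lemmas~\ref{lem:a3} and \ref{lem:a4}.
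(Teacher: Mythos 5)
Your proof is correct, and it follows the same two-part strategy as the paper --- exploit angle preservation to reduce \eref{eq:diff_inverse} to a radial comparison, then bound \eref{eq:diff_inverse_derivative} by splitting the Jacobian difference into a same-point term plus a term controlled by \eref{eq:diff_inverse} and the Lipschitz continuity of the smoother Jacobian --- but the mechanics differ in both steps in ways worth noting. For \eref{eq:diff_inverse} the paper applies the mean value theorem to $\psi_2$ along the segment joining $x$ and $x+dx$, with defect $dx=\phi_2(x_1)-\phi_1(x_1)$, which costs a factor $\|J_{\psi_2}\|_{L^\infty}$ from Lemma~\ref{lem:a4} and uses star-shapedness of $\Omega_2$; your scalar implicit-function argument on the radial profile $g_R(r,\varphi)=r_0r+(R(\varphi)-r_0)r^\eta$ is more elementary, does not invoke Lemma~\ref{lem:a4} at all, and yields the explicit constant $1/r_0$ together with the slightly stronger bound in terms of $\|R_1-R_2\|_{L^\infty(0,2\pi)}$. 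For \eref{eq:diff_inverse_derivative} the paper writes $J_{\psi_i}=J_{\phi_i}^{-1}\circ\psi_i$, which forces it to bound $\|J_{\phi_1}^{-1}-J_{\phi_2}^{-1}\|_{L^\infty(B)}$ by an explicit cofactor/determinant computation and to control the $W^{1,\infty}$ norm of the \emph{inverse} Jacobian $J_{\phi_1}^{-1}$; your identity $A^{-1}-B^{-1}=A^{-1}(B-A)B^{-1}$ pushes all inversions onto the outer factors, which Lemma~\ref{lem:a4} already controls, so you only need Lipschitz continuity of $J_{\phi_1}$ itself, supplied by Lemma~\ref{lem:a3} with $k=3$ --- a modest but genuine simplification. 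Your closing observation that the asymmetric hypothesis $R_1\in H^3_{per}(0,2\pi)$ forces the Lipschitz step onto $J_{\phi_1}$ and thereby dictates evaluating the same-point difference at $\psi_2(P)$ matches exactly the role that assumption plays in the paper's argument.
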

\begin{proof}
\textit{Step 1:} To show \eref{eq:diff_inverse}, let $x\in\Omega_1\cap\,\Omega_2$. Then there are $x_1,x_2\in B$ with $\phi_1(x_1) = \phi_2(x_2) = x$. Since the transformations $\phi_1$ and $\phi_2$ preserve angles, the angular parts $\varphi(x_1) = \varphi(x_2) = \varphi(x) =:\varphi$
are equal. $\phi_2$ has an inverse $\psi_2$, hence
\begin{equation*}
\fl\quad
x_2-x_1 \; = \; \psi_2(\phi_2(x_2))\,-\,\psi_2(\phi_2(x_1)) \; = \; \psi_2(x)\,-\,\psi_2(x\,+dx),
\end{equation*}
where the defect is given by
\begin{equation*}
\fl\quad
dx \; = \; \phi_2(x_1)\,-\,\phi_1(x_1) \; = \; (R_2(\varphi) \, - \, R_1(\varphi))\;|x_1|^\eta\;(\cos(\varphi),\sin(\varphi)).
\end{equation*}
Since $x$ and $dx$ have the same angular coordinate and $\Omega_2$ is star shaped with respect to the origin, we have $\{x + t\;dx\;|\;t\in[0,1]\}\subset\Omega_2$. The mean value theorem yields
\begin{equation*}
\fl\quad
|\psi_2(x)-\psi_1(x)|\;=\;|x_2-x_1| \; = \; |J_{\psi_2}(\xi)\;dx|\;\leq\;\|J_{\psi_2}\|_{W^{1,\infty}(\Omega_2)}\;\|R_1 - R_2\|_{H^1(0,2\pi)}.
\end{equation*}
The assertion \eref{eq:diff_inverse} follows by the estimate of Lemma~\ref{lem:a4}
\textit{Step 2: Show \eref{eq:diff_inverse_derivative}.} Starting at \eref{eq:Jacobian_rep} elementary calculus yields
\begin{equation*}
\fl\quad
\|\det J_{\phi_1}\,-\,\det J_{\phi_2}\|_{L^\infty(B)}\;\leq\;C\,\|R_1-R_2\|_{L^\infty(0,2\pi)},
\end{equation*}
where the constant $C$ depends only on $\|R_1\|_{L^\infty(0,2\pi)}$, $\|R_2\|_{L^\infty(0,2\pi)}$, $r_0$ and $\eta$. 
For the inverse Jacobians, we further conclude that
\begin{equation*}
\fl\quad
\begin{array}{rcl}
\|J_{\phi_1}^{-1} - J_{\phi_2}^{-1}\|_{L^\infty(B)} & \leq & \left\|\frac{1}{\det J_{\phi_1}}J_{\phi_1} - \frac{1}{\det J_{\phi_2}}J_{\phi_2}\right\|_{L^\infty(B)}\\[2ex]
& \leq & \left\|\frac{\det J_{\phi_2}\; - \;\det J_{\phi_1}}{\det J_{\phi_1}\;\det J_{\phi_2}}\;J_{\phi_1}\right\|_{L^\infty(B)}\;+\;\left\|\frac{1}{\det J_{\phi_2}}\;(J_{\phi_1}-J_{\phi_2})\right\|_{L^\infty(B)}\\[2ex]
& \leq & C\;\left(\frac{1}{r_0^4}\;\|R_1\|_{H^2(0,2\pi)}\ + \frac{1}{r_0^2}\right)\;\|R_1-R_2\|_{H^2(0,2\pi)}.
\end{array}
\end{equation*}
Since the inverse Jacobian of $\phi_1$ has the representation $J_{\phi_1}^{-1} = \tilde{J}_{\phi_1}/\det J_{\phi_1}\;$, where $\tilde{J}_{\phi_1}$ is a rearrangement of $J_{\phi_1}$ and all expressions are continuously differentiable, one can verify without difficulty that $J^{-1}_{\phi_1}\in W^{1,\infty}(B)$ with the associated norm bounded in terms of $\eta$, $r_0$, and $\|R_1\|_{H^3(0,2\pi)}$. Therefore we arrive at \eref{eq:diff_inverse_derivative} by
\begin{equation*}
\fl\quad
\begin{array}{rcl}
\|J_{\psi_1}-J_{\psi_2}\|_{L^\infty(\Omega_1\cap\,\Omega_2)} & = & \|J_{\phi_1}^{-1}\circ\psi_1\,-\,J_{\phi_2}^{-1}\circ\psi_2\|_{L^\infty(\Omega_1\cap\,\Omega_2)}\\[1ex]
& \leq & \|J_{\phi_1}^{-1}\|_{W^{1,\infty}(B)}\,\|\psi_1-\psi_2\|_{L^\infty(\Omega_1\cap\,\Omega_2)}\;+\;\|J_{\phi_1}^{-1}-J_{\phi_2}^{-1}\|_{L^\infty(B)}\\[1ex]
& \leq & C\|R_1 - R_2\|_{H^2(0,2\pi)}.
\end{array}
\end{equation*}
This completes the proof of the second estimate of the lemma.
\end{proof}

\section{Interpretation of the data}

Let us briefly comment on the physical interpretation and the preprocessing of the experimental data used for the reconstructions in Section~\ref{sec:num}.

\subsection{Magnitude data}

The magnitude raw data $m^\delta_{raw}$ represent integral means of the proton density over voxels; this values are additionally perturbed by data noise. 
From a histogram of the magnitude data, see Figure~B1, one can deduce to peak values $m_0$ and $m_1$, which are used for scaling of the data.

\begin{figure}[ht!]
\fl\qquad
\label{fig:MAG_HIST}
\includegraphics[scale=0.6]{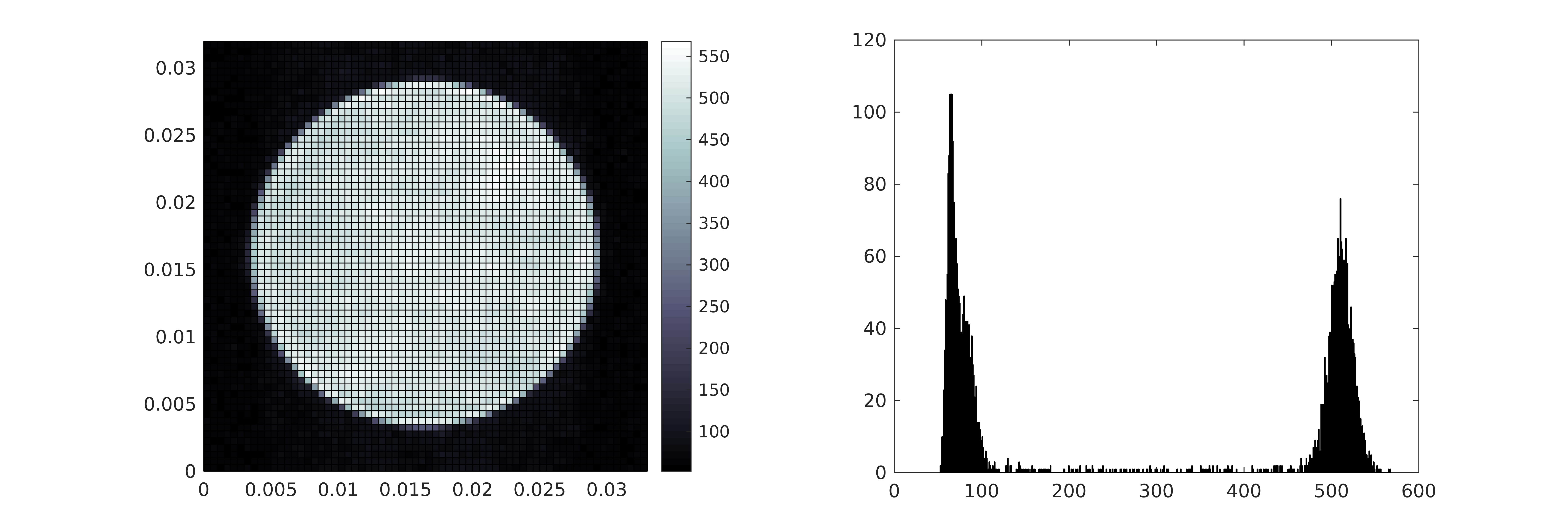}
\caption{Magnitude raw data (left) and magnitude value histogram (right) used to normalize the magnitude data.}
\end{figure}

\noindent
Based on the two values $m_0$, $m_1$, 
the normalized magnitude $m^\delta$ are then defined by
\begin{equation*}
\fl\qquad\qquad
m^\delta|_{V_i} \; = \; T\left(\frac{m_{raw}^\delta|_{V_i} - m_0}{m_1 - m_0}\right),\end{equation*}
with truncation function $T(x) = \max(0,\min(1,x))$. These measurements then represent an approximation for the characteristic function of the flow domain.

\subsection{Velocity data}

The raw data acquired in phase contrast magnetic resonance velocimetry can be interpreted as 
\begin{equation*}
d^\delta|_{V_i} = \int_{V_i} \rho(x) e^{i 2\pi u(x)/v_{enc}} + \mbox{noise}, 
\end{equation*}
where $\rho$ is the propton density and $v_{enc}$ the velocity encoding parameter. An average value of the velocity in the voxel $V_i$ is then recovered as the phase of this signal, i.e., 
\begin{equation} \label{eq:retrieval}
u^\delta|_{V_i} = v_{enc} \cdot \arg(d^\delta|_{V_i}). 
\end{equation}
Phase unwrapping may be required, if the maximal velocity is larger then $v_{enc}$. In the absence of data noise and assuming that the density is given by $\rho = c \chi_\Omega$, one can use Taylor approximation to deduce that 
\begin{equation*}
u^\delta|_{V_i} \approx \frac{1}{|V_i \cap \Omega|} \int_{V_i \cap \Omega} u(x) dx,
\end{equation*}
where $\Omega$ is the flow domain and $u$ the true flow velocity. 
This is the measurement model used in the numerical tests. 
Let us note that the phase retrieval in \eref{eq:retrieval} is 
particularly sensitive to data perturbations, if $\textrm{abs}(d^\delta|_{V_i})$ is small, which is the case close to the boundary and outside the flow domain. 
Therefore, particularly large noise in the velocity data is expected for these voxels; compare with Figure~\ref{fig:vel_raw}.

\section*{Bibliography}


\end{document}